\newcommand{\Gr}{\mathtt{Gr}}
\DeclareMathOperator{\vertex}{Vt}
\DeclareMathOperator{\edge}{Edge}
\DeclareMathOperator{\edgei}{Edge_i}
\DeclareMathOperator{\op}{op}
\DeclareMathOperator{\Iso}{Iso}
\DeclareMathOperator{\Ob}{Ob}
\DeclareMathOperator{\Id}{Id}
\newcommand{\Set}{\mathtt{Set}}
\DeclareMathOperator{\Sc}{Sc}
\newcommand{\wheel}{\circlearrowright}
\newcommand{\Gammaw}{\Gamma_\wheel}
\newcommand{\gwheelc}{\Gr^\wheel_{\text{c}}}
\newcommand{\gupcset}{\Set^{\Gamma^{\op}}}
\newcommand{\Gammawimage}{\Gammaw^\blacksquare}
\newcommand{\Gammawout}{\Gammaw^{\operatorname{out}}}
\newcommand{\wproperad}{\mathtt{Properad}^{\wheel}}
\DeclareMathOperator{\inp}{in}
\DeclareMathOperator{\out}{out}
\newcommand{\sK}{\mathsf{K}}
\newcommand{\sP}{\mathsf{P}}
\newcommand{\gwheelcset}{\Set^{\Gammaw^{\op}}}
\newcommand{\sSet}{\mathtt{sSet}}
\newcommand{\gwheelcsset}{\sSet^{\Gammaw^{\op}}}
\newcommand{\gupcsset}{\sSet^{\Gamma^{\op}}}
\DeclareMathOperator{\map}{map}
\newcommand{\oldB}{\mathcal A}
\newcommand{\oldC}{\mathcal B}
\DeclareMathOperator{\sections}{Sec}
\title{On factorizations of graphical maps}
\author{Philip Hackney}
\email{philip@phck.net} 
\address{Department of Mathematics,
         Macquarie University,
         NSW 2109,
         Australia}
\author{Marcy Robertson}
\email{marcy.robertson@unimelb.edu.au}
\address{School of Mathematics and Statistics, 
         The University of Melbourne,
         Victoria 3010,
         Australia}
\author{Donald Yau}
\email{dyau@math.osu.edu}
\address{Department of Mathematics,
         The Ohio State University at Newark,
         Newark, OH,
         USA}
\date{January 16, 2018}
\begin{document}

\begin{abstract}
We study the categories governing infinity (wheeled) properads.
The graphical category, which was already known to be generalized Reedy, is in fact an Eilenberg-Zilber category. 
A minor alteration to the definition of the \emph{wheeled} graphical category allows us to show that it is a generalized Reedy category.
Finally, we present model structures for Segal properads and Segal wheeled properads.
\end{abstract}

\keywords{Reedy category, graphical set, dendroidal set, Quillen model structure, Eilenberg-Zilber category, properad, wheeled properad.}

\classification{55P48, 18G30, 18D50, 18G55, 18G30, 55U35.}

\received{Month Day, Year}   
\revised{Month Day, Year}    
\published{Month Day, Year}  
\submitted{Bill Murray}      
\volumeyear{2014} 
\volumenumber{16} 
\issuenumber{2}   
\startpage{1}     
\articlenumber{1} 

\maketitle

\section{Introduction}

This paper is one part of a larger project laying the foundations of `up-to-homotopy' properads.
Properads \cite{vallette} are devices like operads and props which control certain (bi)algebraic structures, such as Lie bialgebras and Frobenius algebras.
They are strictly more general than operads (which can model algebraic and coalgebraic structures, but not simultaneously) and strictly less general than props (which are capable of modeling structures like Hopf algebras).

In analogy with the situation for categories and operads (see e.g. \cite{juliesurvey}), there should be a variety of models for such up-to-homotopy properads.
In \cite{hry15}, we constructed a Quillen model structure on the category of simplicially-enriched properads.
This model structure generalized similar model structures in the category \cite{bergner} and operad \cite{cm-simpop} settings. 
On the other hand, in \cite{hry} we gave a definition for $\infty$-properads as graphical sets (i.e., presheaves on a category $\Gamma$ of graphs) satisfying an inner horn filling condition. This generalized similar definitions for quasi-categories\footnote{Also known as $\infty$-categories or restricted Kan complexes.} \cite{bv,htt,joyaltheoryqcat} and dendroidal inner Kan complexes \cite{mw}.
In future work, we will connect the two concepts by means of a homotopy coherent nerve functor which goes from the category of simplicially-enriched properads to the category of graphical sets.
The analogue of the homotopy coherent nerve in the categorical and operadic settings is a right Quillen equivalence (\cite[2.2.5.1]{htt} \& \cite[8.15]{cm-simpop}).

When exploring the properties of the homotopy coherent nerve functor and of the category 
of graphical sets, we took advantage of certain properties also possessed by many of the (generalized) Reedy categories encountered in practice, such as the simplicial category $\Delta$. 
More specifically, there is a collection of related definitions\footnote{We expect that the indicated implications are the only ones that hold, though we lack an example showing that there are categories which are EZ in the sense of \cite{bm} that are not Eilenberg-Zilber in the sense of \cite{minimalfib} and examples of elegant Reedy categories which are not EZ-Reedy. It should be noted that the coequalizer category $\mathcal C = \mathcal C^+ = \{ 0 \rightrightarrows 1 \rightarrow 2 \}$ with precisely four non-identity maps is EZ-Reedy in the sense of \cite{bergnerrezk} but not EZ in the sense of \cite{bm}. Thus none of the other definitions implies EZ \cite{bm}. EZ-categories which happen to be strict Reedy categories are elegant by \cite[3.4]{bergnerrezk}.}
of categories of \emph{Eilenberg-Zilber type} (see the left two columns of Figure \ref{EZ cats fig}); one common feature of all of these is that maps in the inverse category are split epimorphisms.
These definitions are tied in to having a good theory of skeletal filtrations \cite[\S 6 -- 7]{bm}, a frequent coincidence of the injective and Reedy model structures \cite[3.10]{bergnerrezk}, a robust theory of minimal fibrations \cite[5.3]{minimalfib}, and so on.
In any case, in \cite{cm-ho,cm-ds,cm-simpop}, the fact that the dendroidal category $\Omega$ is a skeletal category\footnote{This condition is slightly weaker than the Eilenberg-Zilber categories of \cite{minimalfib}, as it is only required that $\mathcal{R}^-$ be \emph{contained in} the set of split epimorphisms.} in the sense of \cite[D\'efinition 8.1.1]{MR2294028} is used frequently, and in a seemingly essential way.

\begin{figure}
\begin{tikzcd}
\text{EZ-Reedy \cite{bergnerrezk}} \rar{\text{ \cite[4.1]{bergnerrezk} }} \dar & \text{Elegant \cite{bergnerrezk}} \rar & \text{strict Reedy} \dar \\
\text{Eilenberg-Zilber \cite{minimalfib}} \arrow[rr, bend right=10] \dar & \text{EZ-category \cite{bm}} \rar & \text{dualizable generalized Reedy \cite{bm}} \\
\text{Skeletal \cite{MR2294028}} 
\end{tikzcd} 
\caption{Conditions of Eilenberg-Zilber type}\label{EZ cats fig}
\end{figure}
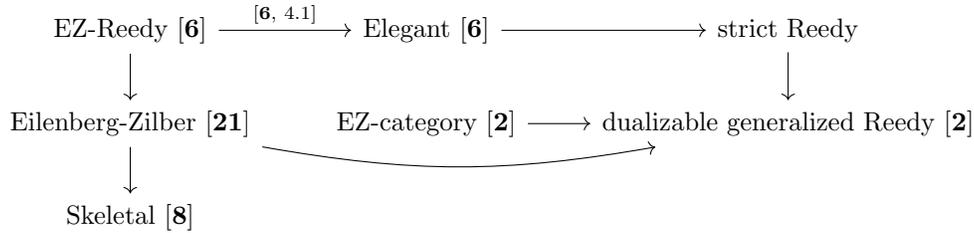

\begin{theorem}\label{gamma ez}
	The graphical category $\Gamma$ is an Eilenberg-Zilber category in the sense of \cite{minimalfib} and an EZ-category in the sense of \cite{bm}.
\end{theorem}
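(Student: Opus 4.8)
The plan is to take the generalized Reedy structure on $\Gamma$ as given and to upgrade it to the two Eilenberg--Zilber conditions by analysing the inverse morphisms directly. Recall that the degree of a graph $G$ is $|\vertex(G)|$, that $\Gamma^+$ is generated by the cofaces, and that $\Gamma^-$ is generated by the elementary codegeneracies $\sigma_v \colon G \to G/v$, where $v$ is a vertex with $|\inp(v)| = |\out(v)| = 1$ and $G/v$ is obtained by deleting $v$ and merging its two adjacent edges into one. Both notions of Eilenberg--Zilber category are built on a \emph{dualizable} generalized Reedy category, so I would first record that the known structure is dualizable; this is immediate once one knows that the cofaces are monomorphisms and the codegeneracies are epimorphisms, as the two automorphism-freeness axioms (for $\Gamma^+$ and for $\Gamma^-$) then hold automatically and the unique factorization transfers to the opposite category.

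The central step is to prove that every morphism of $\Gamma^-$ is a split epimorphism. Split epimorphisms are closed under composition and every map of $\Gamma^-$ is a composite of an isomorphism with elementary codegeneracies, so it suffices to split a single $\sigma_v$. I would construct the section $s \colon G/v \to G$ explicitly: the merged edge $e$ of $G/v$ is sent to the subgraph $\{ e_1, v, e_2 \}$ of $G$ consisting of $v$ together with its two adjacent edges, while every other edge and vertex is sent to its evident image. This $s$ is a coface, hence lies in $\Gamma^+$, and $\sigma_v \circ s = \Id_{G/v}$ by construction.

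It then remains to verify the combinatorial conditions that separate Eilenberg--Zilber categories from arbitrary generalized Reedy categories. The key observation is that the isomorphism classes of codegeneracies out of a fixed $G$ are in bijection with subsets of the (finite) set of its $(1,1)$-vertices, each subset recording which vertices are collapsed. Collapsing is a local operation, so these collapses commute and the quotients of $G$ form the Boolean lattice on this set. From this lattice structure I would read off the remaining axioms of both definitions at once: any two codegeneracies out of $G$ admit a pushout, realised by collapsing the union of their vertex sets, this pushout is absolute, the quotients assemble into a meet-semilattice, and there are only finitely many of them --- this is the combinatorial heart common to the notions of \cite{bm} and \cite{minimalfib}.

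I expect the main obstacle to be the faithful translation of these intuitive collapse operations into the graphical-map formalism, in which edges of the source are sent to connected subgraphs of the target, together with the verification that the pushouts produced are genuinely \emph{absolute} and that the action of graph automorphisms on the set of $(1,1)$-vertices respects the lattice structure. Unlike the simplicial and dendroidal settings, the graphs here need not be trees, so one must check that collapsing $(1,1)$-vertices is compatible with the higher connectivity and the input/output bookkeeping of a general connected graph; once this is established, the required axioms follow by induction on the number of vertices collapsed.
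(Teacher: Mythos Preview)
There are two genuine gaps. First, your construction of the section of $\sigma_v$ is wrong in the graphical formalism: a graphical map sends edges of the source to \emph{edges} of the target (the function $f_0$) and vertices to subgraphs (the function $f_1$), not the other way around. The section must therefore send the merged edge $e$ to \emph{one} of $e_1, e_2$ and send a neighbouring vertex of $e$ to the two-vertex subgraph containing $v$. This is not a mere typo: the paper needs the sharper statement that for any prescribed edge of $G$ there is a section of the codegeneracy having that edge in its image (Lemma~\ref{codegensection}), and this refinement is precisely what powers the uniqueness half of the graphical Eilenberg--Zilber lemma (Lemma~\ref{eilenbergzilber}).

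Second, and more seriously, the assertion that the pushout of two codegeneracies is \emph{absolute} does not follow from the Boolean-lattice description of quotients. That collapses commute and give an ordinary pushout in $\Gamma$ is easy; absoluteness requires that every presheaf $\sK$ turn the square into a pullback of sets, and this is the main technical content. You acknowledge it as an expected obstacle but offer no mechanism. The paper proves it in Lemma~\ref{strong pushouts} by constructing an explicit inverse to $f_1^*\times f_2^*$ using unique non-degenerate representatives, which in turn rests on the refined section lemma above. Likewise, the \cite{minimalfib} condition is not a semilattice or finiteness statement: it is the requirement that two maps in $\Gamma^-$ sharing the same set of sections be equal, and your outline does not address it. The paper handles this separately (Proposition~\ref{EZ1 prop}) by showing $\sections(s)\to\sections(s_0)$ is a bijection (Proposition~\ref{sections_bijection}) and then reducing to the elementary fact for surjections of sets.
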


We have previously demonstrated that $\Gamma$ is a dualizable generalized Reedy category \cite[6.70]{hry}, and this is an extension of this fact.
Since it seems that the two definitions are incomparable, we have shown both in Theorem \ref{EZ2 theorem} and Corollary \ref{EZ1 COROLLARY}. 

Let us now consider the case of wheeled properads \cite{MR2483835}. 
The third author has produced a model structure on simplicially-enriched wheeled properads \cite{yau_wheel} and our group has introduced a category $\Gammaw$ in \cite{hry}, along with a definition for $\infty$-wheeled properads (as presheaves satisfying an inner horn filling condition).
It is natural to ask to which extent properties of $\Gamma$ and properads can be promoted to $\Gammaw$ and wheeled properads. 
One apparent asymmetry in this story is the fact that $\Gammaw$ does not admit the structure of a generalized Reedy category.
In the first part of this paper, we show that this lack is illusory, and that a minor tweak to the definition gives a generalized Reedy structure (Definition \ref{def:greedy}).

\begin{theorem}\label{gammaw reedy}
With the modification from Section \ref{modification section}, the wheeled graphical category $\Gammaw$ is a dualizable generalized Reedy category.
\end{theorem}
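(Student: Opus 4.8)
The plan is to exhibit a generalized Reedy structure on the modified $\Gammaw$ in the sense of Definition~\ref{def:greedy} and to verify the Berger--Moerdijk axioms together with the two cancellation conditions that upgrade it to a \emph{dualizable} one, reducing to the structure already available on $\Gamma$ (\cite[6.70]{hry}) in the cases where the wheels play no role. Concretely, I would take the degree of a connected wheeled graph $G$ to be its number of vertices $|\vertex(G)|$, let $\Gammaw^{+}$ be the wide subcategory generated by the isomorphisms and the cofaces (the maps realizing the source as a subgraph of the target), and let $\Gammaw^{-}$ be the wide subcategory generated by the isomorphisms and the codegeneracies (collapsing a vertex with one input and one output). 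A non-invertible coface strictly raises $|\vertex(-)|$ and a non-invertible codegeneracy strictly lowers it, which is the degree axiom.

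With these choices the two cancellation conditions — the generalized-Reedy axiom and its dual — are essentially formal, and I would dispatch them first. Each codegeneracy is a split epimorphism, so every map in $\Gammaw^{-}$ is an epimorphism; hence $\theta f = f$ with $\theta$ an isomorphism and $f \in \Gammaw^{-}$ forces $\theta = \Id$. Dually, each coface is a subgraph inclusion and so a monomorphism, whence $f \theta = f$ with $f \in \Gammaw^{+}$ forces $\theta = \Id$. (That the inverse maps are split epimorphisms is the common thread of the Eilenberg--Zilber conditions recalled in the introduction.) The substantive content therefore lies in the factorization axiom and in the identity $\Gammaw^{+} \cap \Gammaw^{-} = \Iso(\Gammaw)$.

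The heart of the proof is to factor an arbitrary $f \colon G \to H$ as $G \xrightarrow{f^{-}} \sK \xrightarrow{f^{+}} H$ with $f^{-} \in \Gammaw^{-}$ and $f^{+} \in \Gammaw^{+}$. I would construct $\sK$ as the image of $f$, check that $G \to \sK$ is a composite of codegeneracies and that $\sK \to H$ is a subgraph inclusion; uniqueness up to a unique isomorphism of $\sK$ is then automatic, since $\Gammaw^{-} \subseteq \{\text{epimorphisms}\}$ and $\Gammaw^{+} \subseteq \{\text{monomorphisms}\}$ and (epimorphism, monomorphism)-factorizations are essentially unique. The genuinely new feature of the wheeled setting is the wheel-creating contraction, which changes neither the vertex count nor invertibility status; such a degree-preserving non-isomorphism admits no factorization of the form above (both factors would be degree-preserving, hence isomorphisms), and this is exactly what makes the \emph{unmodified} $\Gammaw$ fail to be generalized Reedy. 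The modification of Section~\ref{modification section} is arranged precisely to remove this pathology, so that every map has a well-defined image and $\Gammaw^{+}\cap\Gammaw^{-}$ collapses to the isomorphisms; I would quote that reorganization and verify that it leaves the wheel-free factorizations, which agree with those of $\Gamma$, untouched.

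The main obstacle is thus concentrated entirely at the wheels: one must confirm that after the modification no degree-preserving non-isomorphism survives, and that the image factorization is unique up to canonical isomorphism even for targets containing loops, where the extra symmetry of a wheel makes the rigidity arguments delicate. I would treat the graphs built from the exceptional wheel $\wheel$ as the critical cases, settling them by direct inspection, and then bootstrap to a general graph $G$ through the image factorization together with the corresponding generalized Reedy statements for $\Gamma$ from \cite[6.70]{hry} (and Theorem~\ref{gamma ez}).
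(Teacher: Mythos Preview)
Your proposal contains several genuine errors that make the argument fail in the wheeled setting.

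First, the degree function $\deg(G)=|\vertex(G)|$ does not work. Contracting coface maps leave $|\vertex(-)|$ unchanged while strictly increasing $|\edgei(-)|$; they are non-invertible maps in $\Gammaw^{+}$ which would preserve your degree, violating axiom~(i). Worse, the exceptional inner coface $\bullet \to {\wheel}$ actually \emph{lowers} vertex count. This is exactly why the paper is forced to use the more elaborate function of Definition~\ref{def gammaw reedy structure}, namely $|\vertex(G)|+|\edgei(G)|+1$ away from the exceptional graphs (cf.\ the remark following Proposition~\ref{reedy1}). The modification of Section~\ref{modification section} does not ``remove'' contracting cofaces or any degree-preserving non-isomorphism; it only separates the objects $\uparrow$ and $\wheel$.

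Second, the two cancellation conditions are not formal here, because your premises for them are false in $\Gammaw$. Not every codegeneracy is a split epimorphism: the codegeneracies of Remark~\ref{failure_for_gw} (e.g.\ $\xi^{1}_{1}C_{(1;1)} \to {\wheel}$, or the family in Figure~\ref{gwheel degen}) admit no section, and indeed Theorem~\ref{gammaw not ez} shows that $\Gammaw$ cannot be an EZ-category precisely for this reason. Dually, not every coface is a monomorphism: outer contracting coface maps are not injective on edge sets, hence not monic (see the paragraph preceding Theorem~\ref{gammaw not ez}). The paper therefore proves (iv) by observing that maps in $\Gammaw^{-}$ are surjective on edges (Proposition~\ref{reedy4}), and proves (iv$'$) by the more delicate Lemma~\ref{which parts injective}, which establishes injectivity only on the subsets $S^{\inp}_G$ and $S^{\out}_G$ for maps in $\Gammaw^{+}$; combined with Lemma~\ref{f zero iso} this suffices. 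Your epi/mono shortcut is unavailable, and the image-factorization strategy (with its ``essentially unique'' (epi,~mono)-factorization) likewise does not go through.
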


We further explain why this modification to the definition of the wheeled graphical category does not have any negative impacts on the existing theory.
With Theorem \ref{gammaw reedy} in hand, one could ask about Eilenberg-Zilber type structures on $\Gammaw$. As we show in Theorem \ref{gammaw not ez}, such structures do not exist -- not every codegeneracy admits a section. We interpret this fact as a significant obstruction to comparison theorems (\`a la \cite{cm-ho,cm-ds,cm-simpop}) for various potential models for up-to-homotopy wheeled properads. 

In the final section, we indicate that certain categories of (simplicial) presheaves of $\Gamma$ and $\Gammaw$ may be endowed with Quillen model structures which identify objects satisfying a Segal condition. 
The fibrant objects in these categories may be considered as a weakened version of monochrome (wheeled) properads.
This section rests on machinery from \cite{hco}, the fact that $\Gamma$ and $\Gammaw$ are generalized Reedy categories, and particular information about the categories themselves.

\begin{remark}
For those uninterested in the wheeled case, either of Sections \ref{sec:EZ} or \ref{sec:weaksegalproperads} can be read immediately (omitting Remark \ref{failure_for_gw} and Theorem \ref{gammaw not ez}), assuming the reader is familiar with Chapters 6 and 7 of \cite{hry}.
The reader primarily interested in the wheeled case should read Sections \ref{modification section}, \ref{sec:reedygammaw}, and \ref{sec:weaksegalproperads}, in that order. Such a reader could, if interested, conclude by reading the relevant parts of Section \ref{sec:EZ}: (the statement of) Lemma \ref{strong pushouts}, Remark \ref{failure_for_gw}, Definition \ref{def:EZ cat}, and Theorem \ref{gammaw not ez}.
\end{remark}

Notation, terms, and definitions may all be found in \cite{hry}, which boasts a thorough index and a comprehensive list of notation, and which is freely available on the arxiv.

\section{A modification to the definition of \texorpdfstring{$\Gammaw$}{Γ↻}}\label{modification section}

We address the definition of the wheeled graphical category $\Gammaw$ from \cite[Chapter 9]{hry}, whose objects were defined as wheeled properads freely generated by graphs.
With this convention, the two exceptional connected graphs $\uparrow$ and $\wheel$
generate the same wheeled properad (see \cite[Example 9.17]{hry}) despite having distinct combinatorial structures.
We now take the point of view that the objects of $\Gammaw$ should be regarded as the graphs themselves, rather than the wheeled properads that they generate.

Recall from \cite[Definition 9.59]{hry} that a \textbf{wheeled properadic graphical map}, or simply a \textbf{graphical map}, is defined as a wheeled properad map $G \xrightarrow{f}  K$ between graphical wheeled properads such that the map $f(G) \to K$ is a subgraph.
This definition follows our usual convention of writing, for $H\in \gwheelc$, the symbol `$H$' as a stand-in for the wheeled properad $\Gammaw(H)$.

This definition is dependent on the subgraph structure of $K$. 
As it stands, $\uparrow$ and $\wheel$ have a different set of subgraphs: the former has only itself as a subgraph, while the latter has both $\uparrow$ and $\wheel$ as subgraphs (see \cite[Corollary 9.53]{hry}).
Thus one expects, from the previous definition, that $\hom(\wheel, \uparrow) = \varnothing$ while $\hom(\uparrow, \uparrow) = *$.
Consequently, ${\wheel} \ncong {\uparrow}$.

This is not the convention that we used in \cite{hry}, where we considered that the subgraph inclusion ${\uparrow} \to {\wheel}$ to be an actual isomorphism in $\Gammaw$, since this morphism becomes an isomorphism in $\wproperad$.
See, for instance, \cite[Remark 9.67]{hry}.
At first glance, this seems to be quite important in the proof of the nerve theorem\footnote{By which we mean that the nerve functor is fully-faithful with essential image characterized by a Segal condition \cite[10.33 \& 10.35]{hry}.};
for example, the proof of \cite[Lemma 10.40]{hry} 
for $G={\wheel}$ uses that ${\uparrow} \to {\wheel}$ is an isomorphism. We will explain in Section \ref{SS:segal} why this is not actually necessary. 

For the purposes of this note, we will regard the combinatorial structure, rather than the categorical structure, as more central to the definitions.
Let the category $\oldB$ have $\Ob(\oldB) = \gwheelc$ and 
\[ \oldB(G,K) \subseteq \wproperad(\Gammaw(G),\Gammaw(K))\]
to be the set of wheeled properadic graphical maps.
In particular, the unique map ${\uparrow} \to {\wheel}$ is not an isomorphism in $\oldB$; in fact, there are no non-identity maps with source $\wheel$.
The nerve theorem for 
this $\oldB$ needs a bit of adjustment to definitions, which we outline in Section \ref{SS:segal}. The basic issue is as follows: 
Since there are no non-identity maps with source $\wheel$, we could replace any $\sK$ with a $\sK$ satisfying $\sK(\wheel) = \varnothing$ but otherwise the same. If our notion of Segal maps does not encompass ${\uparrow} \to {\wheel}$, then we may not have
\[ \sK(\wheel) \xrightarrow{\eta_\wheel} (N\sP_{\sK})(\wheel) \cong (N\sP_{\sK})(\uparrow) \xleftarrow{\cong} \sK(\uparrow) \]
is an isomorphism.

An alternative is to rid ourselves of $\wheel$ entirely, and define $\oldC$ to be the full subcategory of $\oldB$ with object set $\gwheelc \setminus \{ \wheel \}$.
Then $\oldC$ does admit a nerve theorem without any real changes to the text of \cite{hry}, except for ignoring special cases and clauses related to the exceptional loop $\wheel$. 
The fact that Theorem \ref{wproperadnerve} holds for both $\oldB$ and $\oldC$ indicates that strict notions of wheeled properads are insensitive to the choice of $\Gammaw$. 
However, when defining weak versions (as in \cite[10.20(1)]{hry} or Definition \ref{weak segal properad}), the answer changes depending on the choice between $\oldB$ or $\oldC$. 
We can imagine contexts where one choice is more natural than the other, so have elected to include both cases in this paper.
We note that Theorem \ref{gammaw reedy} holds for both of the categories $\oldB$ and $\oldC$.


\subsection{Segal core and the nerve theorem}\label{SS:segal}

Throughout this section, let $\Gammaw$ denote either of the categories $\oldB$ or $\oldC$.
The following is an extension of \cite[Definition 10.30]{hry}, which only defined Segal cores for ordinary graphs having at least one internal edge. 

\begin{definition}
Suppose that $G$ is in $\Gammaw$. 
\begin{enumerate}
\item
Let $B_G$ be the category whose set of objects is $\vertex(G) \amalg \edge(G)$ with non-identity morphisms
\begin{align*}
e &\overset{\inp}\longrightarrow v & \text{when $e$ is an input of $v$}\\
e &\overset{\out}\longrightarrow v & \text{when $e$ is an output of $v$.}
\end{align*}
There is a functor $F \colon B_G \to \gwheelcset$ with
\begin{align*}
	F(e) &= \Gammaw[\uparrow] & F(e \overset{\inp}\longrightarrow v) &= (\Gammaw[\uparrow] \xrightarrow{\inp_e} \Gammaw[C_v]) \\
	F(v) &= \Gammaw[C_v] & F(e \overset{\out}\longrightarrow v) &= (\Gammaw[\uparrow] \xrightarrow{\out_e} \Gammaw[C_v])
\end{align*}
Define the \textbf{wheeled properadic Segal core} $\Sc[G] \in \gwheelcset$ as the colimit of the functor $F$.
\item
Denote by
\[
	\Sc[G] \xrightarrow{\epsilon_G}  \Gammaw[G]
\]
the map induced by the subgraph inclusions
\begin{align*}
	C_v &\to G \\
	\uparrow_e &\to G,
\end{align*}
and call it the \textbf{wheeled properadic Segal core map}.
\end{enumerate}
\end{definition}

If $G$ is an ordinary graph with at least one internal edge, then this definition, for both $\Sc[G]$ and $\epsilon_G$, coincides with that of \cite[Definition 10.30]{hry}. 
The current definition covers all graphs, with the following new definitions: 
\begin{center}
\begin{tabular}{@{}llr@{}}
	\toprule 
$G$ & $\Sc[G]$ & $\epsilon_G$ \\
	\midrule
$\uparrow$ & $\Gammaw[\uparrow]$ &  identity \\
$\wheel$ & $\Gammaw[\uparrow]$ & $\Gammaw[\uparrow] \hookrightarrow \Gammaw[\wheel]$ \\
$C_{(n;m)}$ & $\Gammaw[C_{(n;m)}]$ & identity \\
\bottomrule
\end{tabular}
\end{center}

We should also alter \cite[Definition 10.26]{hry} by letting the corolla ribbon be a \emph{limit} over $B_G$. This guarantees that $\sK(G)_1 \cong \gwheelcset\left(\Sc[G], \sK\right)$ and that \cite[Lemma 10.31]{hry} holds.
With the expanded definition of wheeled properadic Segal map, we can ask for the following.

\begin{definition}
	We say $\sK$ satisfies the \textbf{wheeled properadic Segal condition} if the wheeled properadic Segal map 
\[
	\sK(G) \cong \gwheelcset\left(\Gammaw[G], \sK\right) \to \gwheelcset\left(\Sc[G], \sK\right)
\]
	is a bijection for every $G \in \gwheelc$.
\end{definition}

By the table above, we see that this bijection is automatic for every $\sK$ when $G = {\uparrow}$ and $G = C_{(n;m)}$.
In \cite{hry}, $\epsilon_{\wheel}$ is an isomorphism, hence $\epsilon^*_{\wheel}$ is a bijection for all $\sK$. If we are working with the category $\oldC$, we do not have $\wheel$ as an object.
Thus this definition only has new content when $\Gammaw = \oldB$ and $G = {\wheel}$.

We note that for both $\Gammaw = \oldB$ or $\Gammaw = \oldC$ the \emph{nerve theorem} holds:

\begin{theorem}[{Theorem 10.33 of \cite{hry}}]
\label{wproperadnerve}
Suppose $\sK \in \gwheelcset$.  Then the following statements are equivalent.
\begin{enumerate}
\item
There exist a wheeled properad $\sP$ and an isomorphism $\sK \cong N\sP$.
\item
$\sK$ satisfies the wheeled properadic Segal condition.
\item
$\sK$ is a strict $\infty$-wheeled properad.
\end{enumerate}
\end{theorem}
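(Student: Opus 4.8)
The plan is to deduce the theorem from \cite[Theorem 10.33]{hry} by isolating the two features of the new categories $\oldB$ and $\oldC$ that differ from the $\Gammaw$ of \cite{hry}: the removal of the exceptional loop $\wheel$ (for $\oldC$), and the failure of $\epsilon_\wheel$ to be an isomorphism (for $\oldB$). First I would observe that every step of the original argument — the construction of the nerve $N$, the reconstruction of a wheeled properad $\sP$ from the corolla and edge data of a presheaf $\sK$, and the verification that these are mutually inverse — refers only to corollas $C_v$, edges $\uparrow$, and the gluing maps between them. For $\oldC$, where $\wheel \notin \Ob(\oldC)$, the entire proof then goes through verbatim after deleting the clauses of \cite{hry} that treat $\wheel$ as a special case, since neither the Segal condition nor the nerve ever evaluates at $\wheel$. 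For $\oldB$, the table in Section \ref{SS:segal} shows that the only object at which the Segal condition carries content not already present in \cite{hry} is $G = \wheel$, so the bulk of the three equivalences is inherited and I would concentrate the remaining work at this single object.

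For the implication $(1) \Rightarrow (2)$ at $\wheel$, I would use that the subgraph inclusion $\uparrow \to \wheel$ induces an isomorphism $\Gammaw(\uparrow) \xrightarrow{\cong} \Gammaw(\wheel)$ in $\wproperad$ (\cite[Example 9.17]{hry}); applying $\wproperad(-,\sP)$ shows $N\sP(\wheel) \to N\sP(\uparrow)$ is a bijection, and since $\Sc[\wheel] = \Gammaw[\uparrow]$ with $\epsilon_\wheel$ exactly this inclusion, the Segal map at $\wheel$ is a bijection for every nerve. For $(2) \Rightarrow (1)$, I would build $\sP$ from $\sK$ precisely as in \cite{hry}; because $\wheel$ has no vertices and there are no non-identity maps out of $\wheel$ in $\oldB$, the value $\sK(\wheel)$ supplies no generating data, so the reconstructed $\sP$ agrees with the one built in the $\oldC$ case. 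To check $\sK \cong N\sP$ at $\wheel$ I would use the composite
\[
	\sK(\wheel) \xrightarrow{\ \cong\ } \sK(\uparrow) \xrightarrow{\ \cong\ } N\sP(\uparrow) \xrightarrow{\ \cong\ } N\sP(\wheel),
\]
where the first arrow is the Segal condition at $\wheel$, the second is the isomorphism already established on $\uparrow$, and the third is the properad isomorphism above; naturality with respect to the maps targeting $\wheel$ (in particular $\uparrow \to \wheel$) guarantees that these components assemble with the non-$\wheel$ ones into an isomorphism of presheaves.

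Finally, the equivalence $(2) \Leftrightarrow (3)$ follows from \cite{hry} once the corolla ribbon is taken as a \emph{limit} over $B_G$, as recorded in Section \ref{SS:segal}; this ensures the identification of the relevant filling data with $\gwheelcset(\Sc[G],\sK)$ and keeps \cite[Lemma 10.31]{hry} valid, so that at $\wheel$ the strict filling condition of $(3)$ reduces to exactly the Segal comparison $\sK(\wheel) \to \sK(\uparrow)$ of $(2)$. The main obstacle I anticipate is precisely this bookkeeping at $\wheel$ for $\oldB$: one must confirm both that the reconstruction $\sK \mapsto \sP$ is genuinely insensitive to $\sK(\wheel)$ and that the Segal condition at $\wheel$ is exactly the datum needed to recover $\sK(\wheel)$ from $\sP$. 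This is the step where the original proof invoked the now-discarded identification $\uparrow \cong \wheel$ (as in \cite[Lemma 10.40]{hry}), and it must be replaced throughout by the explicit isomorphism $N\sP(\wheel) \cong N\sP(\uparrow)$ together with the Segal comparison, rather than by an equality of objects in $\Gammaw$.
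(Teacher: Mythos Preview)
Your proposal is correct and follows essentially the same approach as the paper: the paper simply asserts that the arguments of \cite[\S 10.2.3--10.5]{hry} go through \emph{mutatis mutandis} for $\oldB$ and $\oldC$, and singles out the one new observation at $\wheel$, namely that the inner horn inclusion $\Lambda^d[\wheel] \to \Gammaw[\wheel]$ coincides with the Segal core inclusion $\epsilon_\wheel$, so the strict filling condition and the Segal condition at $\wheel$ agree. Your write-up is more explicit---particularly in spelling out the $(1)\Leftrightarrow(2)$ direction at $\wheel$ via the wheeled-properad isomorphism $\Gammaw(\uparrow)\cong\Gammaw(\wheel)$---but the strategy and the key identifications are the same.
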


In fact, from Section 10.2.3 until the end of Chapter 10, all theorems and proofs hold for $\oldB$ and $\oldC$ \emph{mutatis mutandis}.
We only note that if $d\colon \bullet \to {\wheel}$ is the exceptional inner coface map, then the inner horn inclusion $\Lambda^d[\wheel] \to \Gammaw[\wheel]$ is exactly the Segal core inclusion $\epsilon_{\wheel} \colon {\Gammaw[\uparrow]} \to \Gammaw[\wheel]$.
The condition that $\sK$ admits unique fillers for $\Lambda^d[\wheel] \to \Gammaw[\wheel]$ is precisely the condition that $\sK(\wheel) \to \sK(\uparrow)$ admits a unique section, which means $\epsilon_\wheel^* \colon \sK(\wheel) \to \sK(\uparrow)$ is a bijection.

\section{A Generalized Reedy Structure on \texorpdfstring{$\Gammaw$}{Γ↻}}
\label{sec:reedygammaw}

In this section we outline the generalized Reedy structure on $\Gammaw$, taken to mean one of the two categories $\oldB$ or $\oldC$ from the previous section. 

We recall the definition from \cite{bm}.

\begin{definition}
\label{def:greedy}
A \textbf{generalized Reedy structure} \index{generalized Reedy structure} on a small category $\mathcal{R}$ consists of
\begin{itemize}
\item
wide subcategories $\mathcal{R}^+$ and $\mathcal{R}^-$, and 
\item
a degree function $\deg\colon \Ob(\mathcal{R}) \to \mathbb{N}$
\end{itemize}
satisfying the following four axioms.
\begin{enumerate}[(i)]
\item 
Non-invertible morphisms in $\mathcal{R}^+$ (resp., $\mathcal{R}^-$) raise (resp., lower) the degree.  Isomorphisms in $\mathcal{R}$ preserve the degree.
\item 
$\mathcal{R}^+ \cap \mathcal{R}^- = \Iso(\mathcal{R})$.
\item
Every morphism $f$ of $\mathcal{R}$ factors as $f = gh$ with $g  \in \mathcal{R}^+$ and $h \in \mathcal{R}^-$, and this
factorization is unique up to isomorphism.
\item 
If $\theta f=f$ for $\theta \in \Iso(\mathcal{R})$ and $f\in \mathcal{R}^-$, then $\theta$ is an identity.
\end{enumerate} If, morever, the condition
\begin{enumerate}[(iv')]
\item If $f \theta=f$ for $\theta \in \Iso(\mathcal{R})$ and $f\in \mathcal{R}^+$, then $\theta$ is an identity
\end{enumerate}
holds, then we call this a generalized \textbf{dualizable} \index{dualizable} Reedy structure.
\end{definition}

\begin{definition}\label{def gammaw reedy structure}
Define the \textbf{degree} of a graph $G \in \gwheelc$ to be 
\[ \deg (G) = \begin{cases}
0 & \text{if }G = \uparrow; \\
1 & \text{if }G = \bullet; \\
	 |\vertex(G)| + |\edgei(G)| + 1 & \text{otherwise.}
\end{cases}\]
In \cite[9.4.5]{hry}, we defined two wide subcategories of $\Gammaw$ as follows:
\begin{itemize}
\item $\Gammaw^+$ \label{note:gammawplus} is the subcategory generated by all isomorphisms and coface maps
\item $\Gammaw^-$ is the subcategory generated by all isomorphisms and codegeneracy maps.
\end{itemize}
\end{definition}

We will now give a better characterization of these two subcategories, in terms of direct properties of the maps involved.
Recall from \cite[Lemma 9.23, Definition 9.59]{hry} that every map $f \colon G \to K$ in $\Gammaw$ consists of two functions: $f_0$ with domain $\edge(G)$ and $f_1$ with domain $\vertex(G)$.

\begin{proposition}
\label{plus characterization}
Suppose that $f\colon G \to K$ is in $\Gammaw$.
Then $f\in \Gammaw^+$ if and only if for each $v\in \vertex(G)$, the subgraph $f_1(v)$ is not an edge.
\end{proposition}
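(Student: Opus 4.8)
The plan is to introduce the condition $(\ast)$ on a morphism $f\colon G\to K$ --- that for every $v\in\vertex(G)$ the connected subgraph $f_1(v)$ contains at least one vertex --- which is exactly the negation of ``$f_1(v)$ is an edge'' (an edge here meaning an edgeless subgraph, either $\uparrow$ or, in $\oldB$, the loop $\wheel$), and to show that $(\ast)$ cuts out precisely $\Gammaw^+$. For the forward implication I would check that $(\ast)$ holds on the generators of $\Gammaw^+$ and is stable under composition. Isomorphisms send each $v$ to a corolla, and a short inspection of the coface maps shows each sends every vertex to a subgraph containing a vertex, the cofaces with edgeless source (such as $\uparrow\to\wheel$) being vacuous cases. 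Stability under composition is immediate, since $(gf)_1(v)=g(f_1(v))$ contains $g_1(w)$ for any vertex $w$ of $f_1(v)$, and $g_1(w)$ has a vertex by $(\ast)$ for $g$. An induction on the number of generating factors then shows every $f\in\Gammaw^+$ satisfies $(\ast)$.

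For the converse I would use the image factorization. Since $f(G)\to K$ is a subgraph by definition, $f$ factors as $f=\iota\circ f'$ with $f'\colon G\to f(G)$ surjective (and still satisfying $(\ast)$) and $\iota\colon f(G)\hookrightarrow K$ the inclusion. Subgraph inclusions are composites of coface maps by \cite{hry}, so $\iota\in\Gammaw^+$, and it remains to treat the surjective map $f'$. For such an $f'$ the connected subgraphs $\{f'_1(v)\}_v$ have pairwise disjoint vertex sets and jointly cover $f(G)$; contracting the internal edges of each $f'_1(v)$ returns $G$, so reversing these contractions exhibits $f'$ as a composite of inner coface maps. Hence $f'\in\Gammaw^+$ and $f=\iota f'\in\Gammaw^+$.

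I expect this last step to be the main obstacle: making precise that a surjective graphical map satisfying $(\ast)$ is exactly a sequence of edge-uncontractions. This draws on the structural description of graphical maps from \cite{hry} --- that the images $f'_1(v)$ are connected with pairwise disjoint vertex sets and together cover the image, and that such ``vertex expansions'' are generated by inner cofaces. The condition $(\ast)$ is used precisely to rule out codegeneracies: a nontrivial codegeneracy smooths some vertex to an edgeless subgraph ($\uparrow$ or $\wheel$), and no later map can reintroduce a vertex there, as edges are sent to edges and $\wheel$ carries no non-identity map out of it. The argument is uniform in $\oldB$ and $\oldC$, the case of $\oldC$ following by simply ignoring the clauses concerning $\wheel$.
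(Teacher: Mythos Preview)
Your forward direction is exactly the paper's. For the converse, the paper is shorter: it invokes the canonical decomposition of \cite[Theorem 9.69]{hry}, which writes any graphical map as $f = h\circ\delta\circ i\circ\sigma$ with $\sigma$ a composite of codegeneracies, $i$ an isomorphism, and $\delta,h$ composites of cofaces, and then simply observes that condition $(\ast)$ forces $\sigma=\Id$, whence $f=h\delta i\in\Gammaw^+$. Your image-factorization argument together with the claim that a surjective map satisfying $(\ast)$ is a composite of (inner) cofaces amounts to reproving the relevant portion of that decomposition by hand; it is correct, but the step you rightly flag as the ``main obstacle'' is precisely what \cite[Theorem 9.69]{hry} already packages for you.
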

\begin{proof}
If $f\colon G \to K$ and $g \colon K \to H$ satisfy the condition then so does $g\circ f$.
Furthermore, each coface map and isomorphism satisfies the condition, hence $\Gammaw^+$ is contained in the class of maps satisfying the condition.

In the other direction, suppose that $f\colon G \to K$ satisfies the condition that for each $v\in \vertex(G)$, the subgraph $f_1(v)$ is not an edge. 
Then in the decomposition of \cite[Theorem 9.69]{hry}, the map $\sigma$ is the identity. Thus $f = h \circ \delta \circ i \in \Gammaw^+$.
\end{proof}

\begin{proposition}
\label{minus characterization}
Consider the class $\mathcal C$ of morphisms $f\colon G \to K$ in $\Gammaw$ which satisfy the following conditions:
\begin{enumerate}
	\item For each $v\in \vertex(G)$, the subgraph $f_1(v)$ is either an edge or a corolla of $K$. \label{minus one}
	\item Each corolla $C_w$ of $K$ appears as $f_1(v)$ for some $v\in \vertex(G)$. \label{minus two}
	\item The graph $G$ has at least one vertex. \label{minus three}
\end{enumerate}
A map $f$ is in $\Gammaw^-$ if and only if $f$ is an isomorphism or $f\in \mathcal C$.
\end{proposition}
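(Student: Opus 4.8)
The plan is to prove the two inclusions separately, mirroring the template of Proposition~\ref{plus characterization}, and to argue uniformly for $\Gammaw = \oldB$ and $\Gammaw = \oldC$. Throughout I record that the vertex-assignment is functorial, $(g\circ f)_1(v) = g_*(f_1(v))$, where $g_*$ sends an edge $e$ of $K$ to the edge $g_0(e)$ and sends a corolla $C_w$ to the subgraph $g_1(w)$.

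For the inclusion $\Gammaw^- \subseteq \Iso(\Gammaw)\cup\mathcal C$ it suffices, since $\Gammaw^-$ is generated by isomorphisms and codegeneracies, to check that each generator lies in $\Iso(\Gammaw)\cup\mathcal C$ and that $\Iso(\Gammaw)\cup\mathcal C$ is closed under composition. A codegeneracy collapses a single vertex $v_0$ (with distinct input and output) to an edge, so $f_1(v_0)$ is an edge, $f_1(v)=C_v$ for the remaining vertices, every corolla of the target is hit, and the source has a vertex; hence each codegeneracy lies in $\mathcal C$. Closure is a direct computation with the formula above: if $f_1(v)$ is an edge then so is $g_*(f_1(v))$, while if $f_1(v)=C_w$ then $g_*(f_1(v))=g_1(w)$ is an edge or corolla by condition~\ref{minus one} for $g$; surjectivity onto corollas (condition~\ref{minus two}) composes because each corolla $C_u$ of the final target is some $g_1(w)$ and that $C_w$ is in turn some $f_1(v)$, whence $(g\circ f)_1(v)=C_u$; and condition~\ref{minus three} is inherited from the source. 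Pre- and post-composition with isomorphisms is handled the same way.

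For the reverse inclusion, isomorphisms lie in $\Gammaw^-$ by definition, so suppose $f\in\mathcal C$. I apply the factorization of \cite[Theorem 9.69]{hry}, writing $f = h\circ\delta\circ\sigma\circ i$ with $i,h$ isomorphisms, $\sigma$ a composite of codegeneracies, and $\delta$ a composite of cofaces, and I claim conditions~\ref{minus one} and~\ref{minus two} force $\delta$ to be an identity. Since $\sigma$ is surjective onto corollas, every vertex of its target is of the form $\sigma_*i_*(v)=C_w$. If $\delta$ contained an inner coface, then such a $C_w$ would be carried to a subgraph with more than one vertex, so that $f_1(v)$ is neither an edge nor a corolla, contradicting condition~\ref{minus one} (this is the reverse of the phenomenon in Proposition~\ref{plus characterization}). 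If $\delta$ contained an outer coface, then its target would carry a vertex outside the image $f(G)=h(\delta(\sigma(G)))$, so the corresponding corolla of $K$ is not of the form $f_1(v)$, contradicting condition~\ref{minus two}. Hence $\delta$ is an identity and $f=h\circ\sigma\circ i\in\Gammaw^-$.

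The main obstacle is the treatment of the exceptional vertex-free graphs $\uparrow$ and $\wheel$, where the interaction with the choice $\Gammaw\in\{\oldB,\oldC\}$ is most delicate. Condition~\ref{minus three} is precisely what removes $\uparrow$ and $\wheel$ as sources: the only $\Gammaw^-$-maps out of them are isomorphisms, consistent with there being no non-identity maps out of $\wheel$. The genuinely subtle point is maps \emph{into} $\wheel$ in $\oldB$: one must check that no codegeneracy produces $\wheel$ (collapsing a looped vertex is not a codegeneracy), and that for a map $f\colon G\to\wheel$ with $G$ having a vertex the subgraph $f_1(v)$ is forced to be $\wheel$ rather than an edge, so that such $f$ fail condition~\ref{minus one} and are correctly excluded from both $\mathcal C$ and $\Gammaw^-$. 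A clean confirmation of the latter is the degree count: since $\deg(\wheel)=2$ and no non-invertible morphism of $\Gammaw^-$ preserves degree, a map $G\to\wheel$ out of a degree-$2$ graph can lie in $\Gammaw^-$ only if it is an isomorphism. I would verify these exceptional cases by hand using the explicit descriptions of maps into $\uparrow$ and $\wheel$ in \cite[Chapter 9]{hry}.
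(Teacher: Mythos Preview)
Your first two paragraphs follow essentially the same route as the paper: verify that codegeneracies lie in $\mathcal C$, that $\mathcal C$ (together with isomorphisms) is closed under composition, and then use the factorization theorem \cite[9.69]{hry} to show that conditions~\eqref{minus one} and~\eqref{minus two} kill the coface part. One small discrepancy: in the factorization of \cite[9.69]{hry} the map $h$ is a composite of \emph{outer} cofaces (it lies in $\Gammawout$), not an isomorphism; the paper then uses \eqref{minus one} to force $\delta=\Id$ and \eqref{minus two} together with \eqref{minus three} to force $h=\Id$. Your version lumps all cofaces into a single $\delta$ and treats $h$ as an isomorphism, which amounts to the same argument but is a misquotation of the theorem as stated.

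The genuine problem is your third paragraph. Your claims about $\wheel$ are false: there \emph{is} a codegeneracy with target $\wheel$, namely the one associated to $G(\uparrow)=\wheel$ for $G=\xi_1^1 C_{(1;1)}$ (see Remark~\ref{failure_for_gw}). The looped vertex of $G$ is sent to the single edge of $\wheel$, so this codegeneracy lies in $\mathcal C$ exactly as it should --- condition~\eqref{minus two} is vacuous since $\wheel$ has no corollas, and condition~\eqref{minus one} holds because $f_1(v)$ is the edge, not $\wheel$. Your parenthetical ``with distinct input and output'' in the second paragraph already encodes this misconception about codegeneracies in the wheeled setting. Fortunately, the special analysis you propose is not needed: the argument in your first two paragraphs already covers maps into $\wheel$ without modification, and the paper's proof dispatches the vertex-free sources $\uparrow,\wheel$ in one line by observing that the only $\Gammaw^-$-maps out of them are identities.
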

\begin{proof}
Let $f \colon G\to K$ be a morphism in $\Gammaw$ where $G$ does not have a vertex. Then $G\in \{\uparrow, \wheel \}$; since we have
\[
	\Gammaw^-(G,K) = \begin{cases}
		\varnothing & G\neq K \\
		\{ \Id_G \} & G = K
	\end{cases}
\]
we know that $f\in \Gammaw^-$ if and only if $f$ is an identity.
For the remainder of the proof, we only consider maps $f \colon G\to K$ so that $G$ has at least one vertex.

The class $\mathcal C$ is closed under composition.
Further, codegeneracies are in $\mathcal C$ and isomorphisms not involving $\uparrow$ or $\wheel$ are in $\mathcal C$, hence 
$\Gammaw^-(G,K) \subseteq \mathcal C$ whenever $G\notin \{ \uparrow, \wheel \}$.

Conversely, suppose that $f\colon G \to K$ is in $\mathcal C$. In the decomposition given in \cite[Theorem 9.69]{hry}, 
\begin{itemize}
	\item condition \eqref{minus one} gives that $\delta = \Id$, while 
	\item conditions \eqref{minus two} and \eqref{minus three} give that $h= \Id$.
\end{itemize}
Thus $f = i \circ \sigma \in \Gammaw^-$.
\end{proof}

We now turn to proving Theorem \ref{gammaw reedy}. To be precise, we will show that the degree function and wide subcategories from Definition \ref{def gammaw reedy structure} constitute a generalized Reedy structure on the category $\Gammaw$. We will use this structure for the remainder of the section.

\begin{proposition}
\label{reedy1}
The graphical category $\Gammaw$ satisfies condition (i) in Definition \ref{def:greedy}.
\end{proposition}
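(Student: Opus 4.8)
The plan is to verify the three clauses of condition (i) separately: that isomorphisms preserve the degree, that non-invertible maps in $\Gammaw^+$ strictly raise it, and that non-invertible maps in $\Gammaw^-$ strictly lower it. The first is immediate. An isomorphism of $\Gammaw$ is an isomorphism of underlying graphs, so it preserves $|\vertex(-)|$ and $|\edgei(-)|$ and carries each of $\uparrow$, $\bullet$, and $\wheel$ only to itself; since by Definition \ref{def gammaw reedy structure} the degree depends on exactly this data, it is preserved.

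For the remaining two clauses I would first record the effect of a graphical map on the vertex- and internal-edge counts, using the graph-substitution description of morphisms from \cite{hry}: for $f\colon G\to K$, the vertex set of $K$ is the disjoint union of the vertex sets of the subgraphs $f_1(v)$ over $v\in\vertex(G)$, while the internal edges of $K$ are those internal to some $f_1(v)$ together with the non-collapsed images of the internal edges of $G$. When $f\in\Gammaw^+$, Proposition \ref{plus characterization} guarantees that each $f_1(v)$ has at least one vertex, so $|\vertex(K)|\ge|\vertex(G)|$ and $|\edgei(K)|\ge|\edgei(G)|$. When $f\in\Gammaw^-$ lies in the class $\mathcal C$ of Proposition \ref{minus characterization}, condition \eqref{minus one} forces each $f_1(v)$ to contribute at most one vertex, so $|\vertex(K)|\le|\vertex(G)|$ and $|\edgei(K)|\le|\edgei(G)|$. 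In either direction, equality in both counts (together with preservation of graph type, guaranteed in the second case by \eqref{minus two} and \eqref{minus three}) forces $f$ to be an isomorphism.

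Away from the exceptional graphs this already settles the claim. If $G$ and $K$ are neither $\uparrow$ nor $\bullet$ (nor, in the relevant direction, $\wheel$), then $\deg(-)=|\vertex(-)|+|\edgei(-)|+1$, so the monotonicity of the two counts transfers verbatim to $\deg$, and strictness follows from non-invertibility via the equality-forces-isomorphism observation above.

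The main obstacle — and the only real content — lies at the boundary where the degree function switches clauses, namely for maps with source or target in $\{\uparrow,\bullet,\wheel\}$, since there $\deg$ is \emph{not} given by $|\vertex(-)|+|\edgei(-)|+1$. These I would dispatch by hand, there being only finitely many relevant morphisms. On the $\Gammaw^+$ side, every such map sends a source of degree $0$ or $1$ (that is, $\uparrow$ or $\bullet$) into a target of degree at least $2$; in particular the exceptional inner coface $\bullet\to\wheel$ passes from degree $1$ to degree $2$, and $\uparrow\to\wheel$ from $0$ to $2$, precisely because $\wheel$ falls under the generic clause while $\bullet$ and $\uparrow$ do not. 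On the $\Gammaw^-$ side one checks that $\wheel$ is never the target of a non-identity map in $\Gammaw^-$ (no codegeneracy creates the exceptional loop, and a source without vertices admits only identities by Proposition \ref{minus characterization}), while the collapse $\bullet\to\uparrow$ drops the degree from $1$ to $0$ and every other $\Gammaw^-$-map into $\bullet$ or $\uparrow$ has a generic source of degree at least $2$. Confirming that the special values $\deg(\uparrow)=0$ and $\deg(\bullet)=1$ are calibrated to keep exactly these boundary maps strictly monotone is where the care is required.
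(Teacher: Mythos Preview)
Your approach differs from the paper's, which is simpler: since $\Gammaw^+$ and $\Gammaw^-$ are \emph{defined} (Definition~\ref{def gammaw reedy structure}) as the subcategories generated by isomorphisms together with cofaces, respectively codegeneracies, it suffices to show that every individual coface raises degree and every individual codegeneracy lowers it. The paper does exactly this, splitting cofaces into dioperadic and contracting types and handling the few exceptional source/target cases by hand. You instead attempt to treat arbitrary morphisms of $\Gammaw^\pm$ directly via the characterizations in Propositions~\ref{plus characterization} and~\ref{minus characterization}. This can be made to work, but as written it contains several inaccuracies.

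First, the structural claim that ``the vertex set of $K$ is the disjoint union of the vertex sets of the subgraphs $f_1(v)$'' is false for a general $f\in\Gammaw^+$: the definition of graphical map only requires $f(G)\to K$ to be a subgraph inclusion, so $K$ may have vertices not in the image (this is exactly what the outer-coface part of the factorization in \cite[Theorem 9.69]{hry} contributes). The inequality $|\vertex(K)|\ge|\vertex(G)|$ survives, but your stated reason for it does not. Second, there is no map $\bullet\to\uparrow$ at all: the unique vertex of $\bullet$ has profile $(0;0)$, so $\uparrow$ cannot be substituted into it. Third, your assertion that $\wheel$ is never the target of a non-identity map in $\Gammaw^-$ is false; substituting $\uparrow$ into the vertex of $\xi^1_1 C_{(1;1)}$ gives a codegeneracy $\xi^1_1 C_{(1;1)}\to\wheel$ (cf.\ Remark~\ref{failure_for_gw}). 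This does not actually leave a gap, because $\deg(\wheel)=0+1+1=2$ obeys the generic formula and so your ``away from the exceptional graphs'' argument already covers it---but you should not assert a false statement in the argument. Finally, the step ``equality in both counts forces $f$ to be an isomorphism'' is true but requires justification; the cleanest route is through the factorization into cofaces/codegeneracies, at which point you have essentially reproduced the paper's argument anyway.
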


\begin{proof}
Isomorphisms preserve the degree, so it is enough to show that every coface map strictly increases degree and every codegeneracy map strictly decreases degree.

There are no coface maps with target $\uparrow$ or $\bullet$.
We know that if $G\notin \{\uparrow, \bullet\}$ then $\deg(G) \geq 2 > \deg(\bullet) = 1 > \deg(\uparrow) = 0$ , so any coface map with source $\bullet$ or $\uparrow$ (including the exceptional inner coface \cite[9.30]{hry}) strictly increases degree.

We now turn to coface maps $f\colon G \to K$ with $G, K\notin \{\uparrow, \bullet\}$.
Dioperadic coface maps are nondecreasing on $|\edgei(-)|$ and strictly increasing on $|\vertex(-)|$. Contracting coface maps are strictly increasing on $|\edgei(-)|$ and constant on $|\vertex(-)|$. Thus any such coface map $f$ strictly increases degree.

There is no codegeneracy map with target $\bullet$ and the codegeneracy map with target $\uparrow$ decreases degree by two. There are no codegeneracy maps with source $\bullet$ or $\uparrow$.
Each codegeneracy map is nonincreasing on $|\edgei(-)|$ and is strictly decreasing on $|\vertex(-)|$. Thus, in all cases, codegeneracy maps strictly decrease degree.
\end{proof}

\begin{remark}
Hidden in the proof of the preceding proposition is an indication of why we cannot use a simpler degree function
\[
	\deg'(G) = |\vertex(G)| + |\edgei(G)|
\]
for our Reedy structure: 
the exceptional inner coface map $\bullet \to {\wheel}$ preserves $\deg'$ but is not invertible.
This issue disappears if one choses to work with $\oldC$ rather than $\oldB$, as $\wheel$ is not an object of $\oldC$.
\end{remark}

\begin{proposition}
\label{prop:wheeled_decomposition}
The graphical category $\Gammaw$ satisfies condition (iii) in Definition \ref{def:greedy}.  In other words, every map in $f\in \Gammaw$ factors as
\[
f=gh,
\]
where $h\in \Gammaw^-$ and $g\in \Gammaw^+$, and this factorization is unique up to isomorphism.
\end{proposition}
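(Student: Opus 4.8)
The plan is to build the factorization from the canonical decomposition of \cite[Theorem 9.69]{hry} — the same tool already driving Propositions \ref{plus characterization} and \ref{minus characterization} — obtaining existence by a regrouping and uniqueness from the two characterization results. Throughout I keep the notation of the statement ($g$ for the $\Gammaw^+$-part, and I will write the $\Gammaw^-$-part as $\sigma$ or $m$ to avoid clashing with the letter $h$ appearing in the $9.69$ decomposition).

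For existence I would begin with the decomposition $f = h \circ \delta \circ i \circ \sigma$ from \cite[Theorem 9.69]{hry}, in which $\sigma$ is a composite of codegeneracies, $i$ is an isomorphism, and $\delta, h$ are assembled from coface maps. Setting $g := h \circ \delta \circ i$, I claim $f = g \circ \sigma$ is a factorization of the required shape (with $\sigma$ playing the role of $h$ in the statement). Indeed $\sigma \in \Gammaw^-$ by Proposition \ref{minus characterization}, while $g \in \Gammaw^+$ by Proposition \ref{plus characterization}: each of $i$, $\delta$, $h$ sends every vertex to a subgraph that is not an edge, hence so does their composite. The exceptional graphs are disposed of directly: if $G$ has no vertex (so $G \in \{\uparrow, \wheel\}$) then $f$ lies vacuously in $\Gammaw^+$ by Proposition \ref{plus characterization}, and the only map out of $\wheel$ in $\oldB$ is the identity, so the factorization is trivial there.

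For uniqueness up to isomorphism, the step I expect to be the main obstacle is showing that the $\Gammaw^-$-part of any factorization is pinned down by $f$ alone. Suppose $f = g_1 m_1 = g_2 m_2$ with $g_j \in \Gammaw^+$ and $m_j \in \Gammaw^-$. Since $g_j$ is a composite of isomorphisms and coface maps, its edge-function $(g_j)_0$ carries edges to edges; combined with Proposition \ref{minus characterization}, which says each $(m_j)_1(v)$ is an edge or a corolla, this yields the crucial identity: for $v \in \vertex(G)$, the subgraph $f_1(v)$ is an edge of $K$ if and only if $(m_j)_1(v)$ is an edge. Consequently both $m_1$ and $m_2$ contract precisely the set $\{ v \in \vertex(G) : f_1(v) \text{ is an edge}\}$, which is an invariant of $f$. (Such a $v$ is automatically a $(1,1)$-vertex, so the contraction is legitimate.)

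To finish, I would use that a $\Gammaw^-$-map is, up to isomorphism, determined by the vertices it contracts — the non-contracted vertices corresponding bijectively to the vertices of the target — so there is an isomorphism $\theta \colon M_1 \to M_2$ between the two intermediate objects with $m_2 = \theta \circ m_1$. Every codegeneracy is a split epimorphism (the expanding coface is a section) and isomorphisms are epimorphisms, so each $m_j$ is an epimorphism; cancelling $m_1$ in $g_1 m_1 = g_2 m_2 = (g_2 \theta) m_1$ gives $g_1 = g_2 \theta$. Thus the two factorizations differ exactly by the isomorphism $\theta$, which is the uniqueness demanded by condition (iii). The remaining bookkeeping — that $\theta$ is a genuine isomorphism of $\Gammaw$ and that the argument degenerates correctly on the exceptional graphs $\uparrow$, $\bullet$, $\wheel$ — is routine given the characterizations above.
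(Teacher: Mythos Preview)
Your strategy for uniqueness --- showing via Propositions~\ref{plus characterization} and~\ref{minus characterization} that the set of vertices sent to edges by the $\Gammaw^-$-part is an invariant of $f$, and then cancelling --- is different from the paper's, which instead invokes the uniqueness clauses of \cite[Lemma~9.72 and Proposition~9.75]{hry} directly: it further factors each $g_j$ as $a_jb_j$ and chains the two existing uniqueness statements to produce the comparison isomorphism. Your route is more self-contained within this section, but it requires one step the paper's argument avoids.

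That step contains a genuine error. You assert that ``every codegeneracy is a split epimorphism (the expanding coface is a section)'', but in $\Gammaw$ this is false: the codegeneracy from the contracted corolla $\xi^1_1 C_{(1;1)}$ to $\wheel$ admits no section (there are no maps at all from $\wheel$ back to that graph), and the codegeneracies of Figure~\ref{gwheel degen} give further examples lying entirely in $\oldC$. This failure is precisely Remark~\ref{failure_for_gw}, and it is the reason $\Gammaw$ is not an EZ-category (Theorem~\ref{gammaw not ez}); you have imported a fact true in $\Gamma$ (Lemma~\ref{codegensection}) that does not survive the addition of wheels. The repair is easy, since you only need $m_1$ to be an \emph{epimorphism}, not a split one: by Proposition~\ref{minus characterization} the function $(m_1)_0$ is surjective on edges and every corolla of $M_1$ arises as $(m_1)_1(v)$ for some $v$, so any two maps out of $M_1$ that agree after precomposition with $m_1$ agree on all edges and on all vertices, hence coincide. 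With that correction your argument goes through.
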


\begin{proof}
Existence of such a factorization follows at once from \cite[Lemma 9.72]{hry} and \cite[Proposition 9.75]{hry}. 
Suppose that $f=gh=g'h'$ are two such decompositions, with $g,g'\in \Gammaw^+$ and $h,h'\in \Gammaw^-$. Factor $g=ab$, $g'=a'b'$ as in \cite[Proposition 9.75]{hry}, with $a,a' \in \Gammawout \subset \Gammaw^+$ and $b,b' \in \Gammawimage$. Then $f=a(bh) =a'(b'h')$ are factorizations of $f$ as in \cite[Proposition 9.75]{hry}, hence uniqueness gives us an isomorphism $i$ so that $a'i = a$ and $ibh = b'h'$. But then $(ib)h = b'h'$ are two decompositions as in \cite[Lemma 9.72]{hry}, so by uniqueness there is an isomorphism $i'$ with $b'i' = ib$ and $i'h=h'$. But then $g'i' = (a'b')i' = a'(ib) = ab = g$, so the diagram
\[
\begin{tikzcd}
G \rar{h}	\dar[swap]{h'} & H \dar{g} \arrow{dl}{i'}[swap]{\cong}  \\
H' \rar[swap]{g'} & K
\end{tikzcd}
\]
commutes. 
\end{proof}

\begin{proposition}
\label{P:cond2WHEELED}  
The graphical category $\Gammaw$ satisfies condition (ii) in Definition \ref{def:greedy}, namely
\[
\Gammaw^+ \cap \Gammaw^- = \Iso(\Gammaw).
\]
\end{proposition}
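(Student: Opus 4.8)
The plan is to deduce condition (ii) from the degree function of Definition~\ref{def gammaw reedy structure} together with the monotonicity established in Proposition~\ref{reedy1}, rather than from the combinatorial characterizations of Propositions~\ref{plus characterization} and~\ref{minus characterization}. One inclusion is immediate: since $\Gammaw^+$ and $\Gammaw^-$ are the wide subcategories \emph{generated} by isomorphisms together with (respectively) coface and codegeneracy maps, both contain $\Iso(\Gammaw)$, so $\Iso(\Gammaw) \subseteq \Gammaw^+ \cap \Gammaw^-$.

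For the reverse inclusion, I would first upgrade Proposition~\ref{reedy1} from generators to arbitrary morphisms. Any $f \in \Gammaw^+$ is a composite of isomorphisms and coface maps; since isomorphisms preserve degree and coface maps strictly raise it, such an $f$ is degree-nondecreasing, and it strictly raises degree as soon as at least one coface appears in the composite. In particular a composite containing a coface lands in an object of strictly larger degree and so cannot be an isomorphism (isomorphisms preserve degree); contrapositively, every invertible morphism of $\Gammaw^+$ is a composite of isomorphisms, hence an isomorphism, and every \emph{non}-invertible morphism of $\Gammaw^+$ strictly raises degree. The dual statement holds for $\Gammaw^-$ and codegeneracies.

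With this in hand the result is a degree squeeze. Take $f \colon G \to K$ in $\Gammaw^+ \cap \Gammaw^-$. Viewing $f$ in $\Gammaw^+$ gives $\deg(G) \le \deg(K)$, while viewing it in $\Gammaw^-$ gives $\deg(G) \ge \deg(K)$; hence $\deg(G) = \deg(K)$. Since $f \in \Gammaw^+$ and a non-invertible morphism of $\Gammaw^+$ would strictly raise the degree, $f$ must be invertible, i.e.\ $f \in \Iso(\Gammaw)$.

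I expect the only real subtlety to be the bookkeeping in the second paragraph, namely confirming that invertibility within $\Gammaw^+$ (resp.\ $\Gammaw^-$) coincides with being an isomorphism of $\Gammaw$, so that the phrase ``non-invertible morphism in $\mathcal R^+$'' appearing in Definition~\ref{def:greedy} is genuinely controlled by the generator-level computation of Proposition~\ref{reedy1}. As a sanity check (and an alternative not relying on the degree), one can instead argue combinatorially: if $f$ lies in $\Gammaw^-$ but is not an isomorphism then $f \in \mathcal C$, so by Proposition~\ref{minus characterization} each $f_1(v)$ is an edge or a corolla; membership in $\Gammaw^+$ forbids the edge case by Proposition~\ref{plus characterization}, forcing every $f_1(v)$ to be a corolla and $f_1$ to surject onto the corollas of $K$, which (together with $G$ having a vertex) pins $f$ down as an isomorphism. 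The degree argument is cleaner, so I would present that and, if desired, relegate the combinatorial route to a remark.
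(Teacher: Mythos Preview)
Your argument is correct and follows the same core strategy as the paper: a degree squeeze using Proposition~\ref{reedy1}. The only notable difference is in how the monotonicity of $\Gammaw^\pm$ on degrees is established. The paper invokes Proposition~\ref{prop:wheeled_decomposition} (and implicitly the factorization theorem \cite[9.69]{hry}) to write $f = \partial i$ and $f = i'\sigma$, then observes that $\partial$ must be the identity once degrees match; it also peels off the vertex-free case separately via Proposition~\ref{minus characterization}. You instead argue directly from the definition of $\Gammaw^\pm$ as subcategories \emph{generated} by isomorphisms together with coface (resp.\ codegeneracy) maps, which makes the monotonicity immediate and renders the special case unnecessary. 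This is a modest streamlining rather than a genuinely different route. Your combinatorial ``sanity check'' via Propositions~\ref{plus characterization} and~\ref{minus characterization} would need a further line to conclude that a surjection onto corollas with each $f_1(v)$ a corolla forces $f$ to be an isomorphism, but since you present it only as an aside this is harmless.
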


\begin{proof}
        Inclusion from right to left is straightforward. For the reverse, suppose that $f\colon G\to K$ is in $\Gammaw^+ \cap \Gammaw^-$. 
If $\vertex(G) = \varnothing$, then $f$ is an isomorphism by Proposition \ref{minus characterization}.
        By Proposition \ref{prop:wheeled_decomposition} we know that, since $f\in \Gammaw^+$, $f$ admits a factorization
\[
f = \partial i,
\]
where $\partial$ is a composition of coface maps and $i$ is an isomorphism.  In particular, we have
\begin{equation*}
\label{GleK} \tag{$\ast$}
\deg(G) \leq \deg(K).
\end{equation*}
Since $f\in \Gammaw^-$, this same proposition also gives a factorization
\[
f= i' \sigma,
\]
where $\sigma$ is a composition of codegeneracies and $i'$ is an isomorphism.  So we have
\[
\deg(G) \geq \deg(K).
\]
Together with \eqref{GleK}, we conclude that 
\[
\deg(G) = \deg(K).
\]
This implies that $\partial$ is the identity, so $f=i$ is an isomorphism.
\end{proof}

Given a graph $G$, consider the following two sets:
\begin{align*}
	S^{\inp}_G &= \edgei(G) \amalg \inp(G) \\
	S^{\out}_G &= \edgei(G) \amalg \out(G)
\end{align*}

\begin{lemma}\label{f zero iso}
Suppose that $f,f' \colon G \to K$ are isomorphisms in $\Gammaw$. 
The following are equivalent:
\begin{enumerate}
	\item $f=f'$ \label{item f}
	\item $f_0 = f'_0$ \label{item f0}
	\item $f_0|_{S^{\inp}_G} = f'_0|_{S^{\inp}_G}$ and $f_0|_{S^{\out}_G} = f'_0|_{S^{\out}_G}$. \label{item f0 pieces}
\end{enumerate}
\end{lemma}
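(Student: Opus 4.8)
The plan is to prove the chain of implications $(\ref{item f}) \Rightarrow (\ref{item f0}) \Rightarrow (\ref{item f0 pieces}) \Rightarrow (\ref{item f})$, where the first two are immediate and the real content is the last. The implication $(\ref{item f}) \Rightarrow (\ref{item f0})$ is trivial, since equal maps have equal edge-components. The implication $(\ref{item f0}) \Rightarrow (\ref{item f0 pieces})$ is also immediate, as it merely restricts the equality $f_0 = f_0'$ to the subsets $S^{\inp}_G$ and $S^{\out}_G$ of $\edge(G)$. So the work is entirely in showing $(\ref{item f0 pieces}) \Rightarrow (\ref{item f})$.

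For the substantive direction, the first step is to observe that an isomorphism $f\colon G \to K$ is determined by its underlying pair $(f_0, f_1)$, and moreover that for an isomorphism the vertex-component $f_1$ is itself determined by the edge-component $f_0$: since $f_1(v)$ must be a corolla $C_w$ of $K$ (an isomorphism sends corollas to corollas), and a corolla of $K$ is pinned down by its incident edges, knowing how $f_0$ acts on the edges adjacent to $v$ forces the value of $f_1(v)$. Thus it suffices to recover all of $f_0$ from its restrictions to $S^{\inp}_G$ and $S^{\out}_G$. The union $S^{\inp}_G \cup S^{\out}_G$ is $\edgei(G) \amalg \inp(G) \amalg \out(G)$, which accounts for the internal edges together with the input and output legs; the point is that this collection exhausts all of $\edge(G)$, so that $f_0|_{S^{\inp}_G}$ and $f_0|_{S^{\out}_G}$ together determine $f_0$ on its whole domain.

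Concretely, I would argue that $\edge(G) = \edgei(G) \amalg \inp(G) \amalg \out(G)$: every edge of a graph is either an internal edge or a leg, and every leg is either an input or an output (here one must handle the exceptional graphs $\uparrow$ and $\wheel$, where there are no internal edges, but the single edge is still a leg or is tracked appropriately; since an isomorphism's domain and target have the same type, these cases reduce to a direct check). Hypothesis $(\ref{item f0 pieces})$ gives that $f_0$ and $f_0'$ agree on $\edgei(G)$ (common to both restrictions), on $\inp(G)$, and on $\out(G)$, hence on all of $\edge(G)$; this yields $f_0 = f_0'$, i.e.\ $(\ref{item f0})$, and then the determination of $f_1$ by $f_0$ gives $f = f'$.

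The main obstacle I anticipate is the careful treatment of the exceptional objects $\uparrow$ and $\wheel$ and the precise bookkeeping of how $f_1$ is reconstructed from $f_0$. The reconstruction step relies on the structural fact (from the graph-theoretic setup of \cite{hry}) that an isomorphism carries corollas to corollas and respects the input/output incidence relations, so that the action on adjacent edges determines the action on vertices; I would want to cite the relevant lemma characterizing maps in $\Gammaw$ by their $(f_0, f_1)$ data rather than re-derive it. The only genuinely delicate point is confirming that $S^{\inp}_G$ and $S^{\out}_G$ jointly cover $\edge(G)$ even in degenerate cases, but this is a finite check given the explicit list of exceptional graphs.
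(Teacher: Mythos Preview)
Your proposal is correct and follows essentially the same route as the paper: both arguments reduce to showing that $f_1$ is determined by $f_0$ (since an isomorphism sends each vertex to a corolla, which is pinned down by its profiles $(f_0(\inp(v));f_0(\out(v)))$) and that $S^{\inp}_G \cup S^{\out}_G$ exhausts $\edge(G)$ because every edge is internal, an input, or an output. The paper phrases the latter step as ``isomorphisms send inputs to inputs, outputs to outputs, and inner edges to inner edges,'' but the content is the same coverage argument you give; your treatment of the exceptional graphs is slightly more explicit than the paper's, but not materially different.
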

\begin{proof}
It is clear that \eqref{item f} implies \eqref{item f0 pieces} even when $f$ is not an isomorphism. 

Suppose that $f$ is an isomorphism. Then $f$ is bijective on edges and sends each vertex $v$ to a corolla $f(v) = C_{v'}$. This vertex $v'$ is uniquely determined by its profiles $(f_0(\inp(v)); f_0(\out(v))$, hence by $f_0$. This shows that \eqref{item f0} implies \eqref{item f}.
Since isomorphisms send inputs to inputs, outputs to outputs, and inner edges to inner edges, we have that \eqref{item f0 pieces} implies \eqref{item f0}.
\end{proof}

\begin{proposition}
\label{reedy4}
The graphical category $\Gammaw$ satisfies condition (iv) 
in Definition \ref{def:greedy}.  In other words, if $f\in \Gammaw^-$, $\theta \in \Iso(\Gammaw)$, and $\theta f =f$, then $\theta = \Id$.
\end{proposition}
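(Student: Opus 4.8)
The plan is to reduce the equality $\theta = \Id$ to a statement about edges and then feed that into Lemma \ref{f zero iso}. First I would dispose of the degenerate case: if $\vertex(G) = \varnothing$, then by the displayed formula in the proof of Proposition \ref{minus characterization} we have $f \in \Gammaw^-$ only when $f = \Id_G$ (so $K = G$), and then $\theta = \theta \circ \Id_G = \theta f = f = \Id_G$ immediately. Hence I may assume $\vertex(G) \neq \varnothing$, so that $G \notin \{\uparrow, \wheel\}$ and Proposition \ref{minus characterization} places $f$ in the class $\mathcal C$; note this covers the subcase where $f$ happens to be an isomorphism as well.

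Next I would set up the reduction to edges. To prove $\theta = \Id_K$ it suffices, by the implication \eqref{item f0} $\Rightarrow$ \eqref{item f} of Lemma \ref{f zero iso} applied to the two isomorphisms $\theta, \Id_K \colon K \to K$, to show that $\theta_0 = \Id_{\edge(K)}$. Using that the edge-assignment of a graphical map is functorial under composition (the $f_0$-component of \cite[Lemma 9.23]{hry}), the hypothesis $\theta f = f$ gives $\theta_0 \circ f_0 = f_0$, so $\theta_0$ fixes every edge lying in the image of $f_0$. Therefore the whole proposition comes down to a single claim: for $f \in \mathcal C$, the edge map $f_0 \colon \edge(G) \to \edge(K)$ is surjective.

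The surjectivity of $f_0$ is the heart of the matter and the step I expect to be the main obstacle, since it is exactly where condition \eqref{minus two} (every corolla of $K$ is hit) must be leveraged together with the corolla-boundary bookkeeping of graphical maps. Given $a \in \edge(K)$, if $K$ has no vertex then $K$ has a single edge and surjectivity is automatic because $\edge(G) \neq \varnothing$; otherwise $a$ is a boundary edge of some vertex $w$ of $K$, so $a \in \inp(w) \cup \out(w)$. By condition \eqref{minus two} there is a $v \in \vertex(G)$ with $f_1(v) = C_w$, and for such a $v$ the compatibility built into the definition of a graphical map identifies $f_0(\inp(v)) = \inp(w)$ and $f_0(\out(v)) = \out(w)$; hence $a$ lies in the image of $f_0$. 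Combining these, $\theta_0$ fixes all of $\edge(K)$, so $\theta_0 = \Id_{\edge(K)}$, and Lemma \ref{f zero iso} yields $\theta = \Id_K$. The care needed is simply to verify that every edge of $K$ really is seen by $f_0$ via a hit corolla, and to confirm that the input/output profiles of $v$ map onto those of $w$ when $f_1(v)$ is the corolla $C_w$.
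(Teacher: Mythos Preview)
Your proof is correct and follows the same approach as the paper: reduce to showing $\theta_0 = \Id$ via Lemma \ref{f zero iso}, then use that $f_0$ is surjective together with the elementary fact in $\Set$ that $\theta_0 f_0 = f_0$ with $f_0$ surjective forces $\theta_0 = \Id$. The paper's proof is considerably terser---it simply asserts that $f_0$ is surjective without argument---whereas you supply the details by invoking condition \eqref{minus two} of Proposition \ref{minus characterization} to see that every edge of $K$ (being adjacent to some vertex $w$) lies in the image of $f_0$ via a preimage vertex $v$ with $f_1(v) = C_w$.
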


\begin{proof}
        Since isomorphisms in $\Gammaw$ are determined by their actions on edge sets by the previous lemma, 
        it is enough to show that $\theta_0$ is an identity. But now this comes down to the same fact in $\Set$: If $f_0$ is surjective and $\theta_0f_0 = f_0$, then $\theta_0 = \Id$.
\end{proof}

\begin{lemma}\label{which parts injective}
If $f\colon G \to K$ is in $\Gammaw^+$, then $f_0|_{S^{\inp}_G}$ and $f_0|_{S^{\out}_G}$ are injective.
\end{lemma}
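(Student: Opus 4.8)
The plan is to argue directly from the local structure of a graphical map rather than by checking generators, using Proposition \ref{plus characterization} to control the subgraphs $f_1(v)$. First I would dispose of the degenerate cases: if $\vertex(G)=\varnothing$ then $G\in\{\uparrow,\wheel\}$, and in either case each of $S^{\inp}_G$ and $S^{\out}_G$ is a singleton (the set $\{e\}$ for the unique edge of $\uparrow$, and $\edgei(\wheel)$ for $\wheel$), so both restrictions are injective for trivial reasons. Hence I may assume that $G$ has at least one vertex.

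For the main case I would record three facts about the decomposition $(f_0,f_1)$ of a graphical map (see \cite[Lemma 9.23]{hry}): (a) for each $v\in\vertex(G)$ the restrictions $f_0|_{\inp(v)}$ and $f_0|_{\out(v)}$ are injective, since the only identifications of legs of a corolla come from contractions, which always pair an input with an output and never two inputs or two outputs; (b) for distinct $v,v'\in\vertex(G)$ the subgraphs $f_1(v)$ and $f_1(v')$ have disjoint vertex sets; and (c) since $f\in\Gammaw^+$, Proposition \ref{plus characterization} guarantees that no $f_1(v)$ is an edge, so each $f_1(v)$ contains a vertex.

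The core is then a bookkeeping step. Every $e\in S^{\inp}_G$ is an inner edge or an input leaf, so it flows into a unique vertex $\nu(e)\in\vertex(G)$. Because $f$ respects incidence and $f_1(\nu(e))$ is not an edge, the edge $f_0(e)$ flows into a vertex $\tau(e)$ of $K$ with $\tau(e)\in\vertex(f_1(\nu(e)))$. Now suppose $f_0(e)=f_0(e')$ with $e,e'\in S^{\inp}_G$; since an edge of $K$ has a single target vertex we get $\tau(e)=\tau(e')$, so $\vertex(f_1(\nu(e)))$ and $\vertex(f_1(\nu(e')))$ share this vertex, and (b) forces $\nu(e)=\nu(e')$. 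Thus $e$ and $e'$ are two inputs of one vertex with a common image, and (a) gives $e=e'$. The claim for $S^{\out}_G$ is proved symmetrically, replacing inputs by outputs and the target vertex of $f_0(e)$ by its source vertex.

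The main obstacle is the correct treatment of loops in step (c), and it is exactly what the statement is designed around. When $f$ involves a contracting coface --- for instance the exceptional inner coface $\bullet\to\wheel$ --- an input of $v$ can be sent to a \emph{loop} of $f_1(v)$ rather than to a boundary input, and the corresponding output of $v$ is sent to that same loop; this is precisely why $f_0$ fails to be injective on all of $\edge(G)$. The point is that such an identification is always between an input and an output, hence invisible to each of $S^{\inp}_G$ and $S^{\out}_G$ separately. I would therefore phrase (c) in terms of the vertex into which $f_0(e)$ flows, which lies in $\vertex(f_1(\nu(e)))$ whether $f_0(e)$ is a boundary leg or a loop, so that the disjointness argument applies uniformly in both cases.
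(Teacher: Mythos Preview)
Your proof is correct and takes a genuinely different route from the paper's. The paper argues by induction on the generators of $\Gammaw^+$: it checks the base case for a single coface map or isomorphism, and then for a composite $g\circ f$ shows that $f_0$ carries $S^{\inp}_G$ into $S^{\inp}_K$ (using Proposition~\ref{plus characterization}), so that the restriction of $(g\circ f)_0$ is a composite of two injections. Your argument instead works directly with an arbitrary $f\in\Gammaw^+$, combining three structural facts about graphical maps: injectivity of $f_0$ on the inputs (resp.\ outputs) of a single vertex, disjointness of the vertex sets of the subgraphs $f_1(v)$, and the $\Gammaw^+$-characterization ensuring each $f_1(v)$ contains a vertex. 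The paper's approach is more self-contained because it reduces everything to the easily-inspected case of a single coface, whereas your approach requires knowing that facts (a) and (b) hold for graphical maps in general (they do: (a) because the input profile of a subgraph with a vertex lists distinct edges of $K$, and (b) because the image $f(G)\to K$ being a subgraph forces the substituted pieces to have disjoint vertex sets). In exchange, your argument is more conceptual---it exhibits directly why collisions in $S^{\inp}_G$ are impossible, rather than deferring to a factorization---and it avoids the bookkeeping of showing that $f_0(S^{\inp}_G)\subseteq S^{\inp}_K$ for the inductive step. Your careful handling of loops in the final paragraph is exactly the point where a careless version of either argument would go wrong.
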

\begin{proof}
We will show that $f_0|_{S^{\inp}_G}$ is injective, since the proof for $f_0|_{S^{\out}_G}$ is identical. 

Notice that if $f$ is a coface map or an isomorphism, then $f_0|_{S^{\inp}_G}$ is injective.
Suppose that $f\colon G \to K$ and $g\colon K \to H$ are in $\Gammaw^+$ and that $f_0|_{S^{\inp}_G}$ and $g_0|_{S^{\inp}_K}$ are injective. We wish to show that $(g\circ f)_0|_{S^{\inp}_G}$ is injective as well, since that will imply the result for every map in $\Gammaw^+$.
If $G$ has no vertices, then $S^{\inp}_G = *$ and it follows that $(g\circ f)_0|_{S^{\inp}_G}$ is injective.
Assume then that $G$ has at least one vertex.
It is enough to show that $f_0|_{S^{\inp}_G}$ factors through $S^{\inp}_K$: 
\[ \begin{tikzcd}
S^{\inp}_G \dar[hook] \rar[dotted] & S^{\inp}_K\dar[hook]  \\
\edge(G) \rar{f_0} & \edge(K) \rar{g_0} & \edge(H).
\end{tikzcd} \]

We use Proposition \ref{plus characterization}.
If $e$ is an inner edge of $G$, $e\in \inp(v) \cap \out(w)$, then $f_0(e) \in \inp(f_1(v)) \cap \out(f_1(w))$. Since $f_1(v)$ and $f_1(w)$ each contain a vertex and are connected, $f_0(e)$ is an inner edge.
If $e \in \inp(G)$ is an input edge, then since $G$ was assumed to have a vertex, $e$ is adjacent to a vertex $v$. Then since $f_1(v)$ is not an edge, $f_0(e) \in \inp(f_1(v))$, hence $f_0(e) \notin \out(K)$.
In both cases, $f_0(e) \in S^{\inp}_K$, so $f_0|_{S^{\inp}_G}$ factors through $S^{\inp}_K$.
\end{proof}

\begin{proposition}
\label{reedy4prime}
The graphical category $\Gammaw$ satisfies condition (iv') 
in Definition \ref{def:greedy}.  In other words, if $f\in \Gammaw^+$, $\theta \in \Iso(\Gammaw)$, and $f \theta =f$, then $\theta = \Id$.
\end{proposition}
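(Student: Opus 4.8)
The plan is to dualize the proof of Proposition \ref{reedy4}, trading the surjectivity used there for the injectivity supplied by Lemma \ref{which parts injective}. Here $\theta$ is an automorphism $\theta \colon G \to G$ and $f \colon G \to K$, and by Lemma \ref{f zero iso} it suffices to prove that $\theta_0$ restricts to the identity on each of $S^{\inp}_G$ and $S^{\out}_G$. Since these two sets cover $\edge(G)$, the implication \eqref{item f0 pieces}$\Rightarrow$\eqref{item f} of that lemma (applied to the isomorphisms $\theta$ and $\Id_G$) will then give $\theta = \Id$.

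First I would record the edge-level consequence of the hypothesis: because composition in $\Gammaw$ is covariant on edges, the equation $f\theta = f$ says exactly that $f_0 \circ \theta_0 = f_0$ as functions $\edge(G) \to \edge(K)$. Next, since $\theta$ is an isomorphism it carries inputs to inputs, outputs to outputs, and inner edges to inner edges; hence $\theta_0$ restricts to a bijection of $S^{\inp}_G$ with itself, and likewise of $S^{\out}_G$.

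With these two observations in place the argument is immediate. Fix $x \in S^{\inp}_G$. Then $\theta_0(x)$ again lies in $S^{\inp}_G$, and the identity $f_0 \circ \theta_0 = f_0$ gives $f_0(\theta_0(x)) = f_0(x)$. By Lemma \ref{which parts injective} the restriction $f_0|_{S^{\inp}_G}$ is injective, so $\theta_0(x) = x$; thus $\theta_0|_{S^{\inp}_G} = \Id$. The identical argument applied to $S^{\out}_G$ yields $\theta_0|_{S^{\out}_G} = \Id$, which completes the reduction.

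The one point requiring care — and the reason Lemmas \ref{f zero iso} and \ref{which parts injective} are phrased in terms of $S^{\inp}_G$ and $S^{\out}_G$ rather than all of $\edge(G)$ — is that, unlike the minus case, $f_0$ need not be injective on the whole edge set, so one genuinely must split $\edge(G)$ into the input- and output-flavored pieces before cancelling $f_0$. The degenerate cases $G \in \{\uparrow, \wheel\}$ present no difficulty, since then $S^{\inp}_G$ and $S^{\out}_G$ are singletons and $\theta_0$ restricts to the identity on them automatically.
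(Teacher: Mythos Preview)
Your proof is correct and is essentially identical to the paper's own argument: both restrict the equation $f_0\theta_0 = f_0$ to $S^{\inp}_G$ and $S^{\out}_G$, invoke the injectivity from Lemma~\ref{which parts injective} to cancel, and then appeal to Lemma~\ref{f zero iso} to conclude $\theta = \Id$. The only difference is expository --- your added remarks about $S^{\inp}_G \cup S^{\out}_G$ covering $\edge(G)$ and about the exceptional graphs are not needed, since Lemma~\ref{f zero iso} already packages the implication \eqref{item f0 pieces}$\Rightarrow$\eqref{item f} directly.
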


\begin{proof}
Write $f\colon G \to K$, $\theta\colon G \to G$.
Write $\theta_0|_{S^{\inp}_G}$ for the induced bijection on $S^{\inp}_G$.
By assumption that $f\theta = f$, we have
\[
	f_0|_{S^{\inp}_G} \circ \theta_0|_{S^{\inp}_G} = f_0|_{S^{\inp}_G}.
\]
By Lemma \ref{which parts injective}, $f_0|_{S^{\inp}_G}$ is injective, hence $\theta_0|_{S^{\inp}_G} = \Id_{S^{\inp}_G}$.
Similarly, since $f_0|_{S^{\out}_G}$ is injective, 
$\theta_0|_{S^{\out}_G} = \Id_{S^{\out}_G}$ and Lemma \ref{f zero iso} implies that $\theta = \Id$.
\end{proof}

\begin{proof}[Proof of Theorem \ref{gammaw reedy}]
Combine Propositions \ref{reedy1}, \ref{prop:wheeled_decomposition}, \ref{P:cond2WHEELED}, \ref{reedy4}, and \ref{reedy4prime}.
\end{proof}

\begin{remark}
The subcategory inclusion $\Gamma \subseteq \Gammaw$ respects the generalized Reedy structures on both sides in the sense that $\Gamma^- \subseteq \Gammaw^-$ and $\Gamma^+ \subseteq \Gammaw^+$.
Further, we have $\deg_\Gamma (G) \leq \deg_{\Gammaw} (G)$ for all wheel-free graphs $G$ (this inequality is usually strict), so the inclusion functor preserves filtrations.
More importantly, we have the following:
\begin{itemize}
\item If $G$ is wheel-free and $G\to H$ is in $\Gammaw^-$, then $H$ is also wheel-free and $G\to H$ is in $\Gamma^-$.
\item If $H$ is wheel-free and $G\to H$ is in $\Gammaw^+$, then $G$ is also wheel-free and $G\to H$ is in $\Gamma^+$.
\end{itemize}
These facts imply that $\Gamma \to \Gammaw$ is fibering and cofibering in the sense of \cite{hv15}.
In the case of strict Reedy categories, Hirschhorn and Voli\'c show that a Reedy functor $\mathcal{R} \to \mathcal{S}$ is fibering (resp. cofibering) if and only if $\mathcal{M}^{\mathcal{S}} \to \mathcal{M}^{\mathcal{R}}$ is a right (resp. left) Quillen functor \cite{hv15}.
If the forward direction of this result can be extended to the setting of generalized Reedy model structures, then the functor $\mathcal{M}^{\Gammaw^{\op}} \to \mathcal{M}^{\Gamma^{\op}}$ would be both left and right Quillen.
\end{remark}

\section{An Eilenberg-Zilber structure on \texorpdfstring{$\Gamma$}{Γ}}\label{sec:EZ}

The goal of this section is to prove Theorem \ref{gamma ez}, that is, to show that $\Gamma$ is an EZ-category in the sense of Berger and Moerdijk (see Definition \ref{def:EZ cat}). 
To that end, we need a solid understanding of codegeneracies and their sections. 
The key result is Lemma \ref{eilenbergzilber}.

We will prove a graphical version of the Eilenberg-Zilber lemma.  The following preliminary observation is needed.  It says that every codegeneracy map in the graphical category has a section, which can furthermore be chosen to have any pre-selected edge in its image.  

The idea of the construction is given in two figures: consider the codegeneracy $s$ as operating on a portion of the graph in Figure \ref{degeneracy_picture}, with two possible sections as given in Figure \ref{sections_picture}.
If $w$ or $u$ is not present (that is, if $b_1$ is an input or $b_{-1}$ is an output), the definition of sections is slightly simpler.

\begin{figure}
\begin{center}
   \includegraphics[width=.4\textwidth]{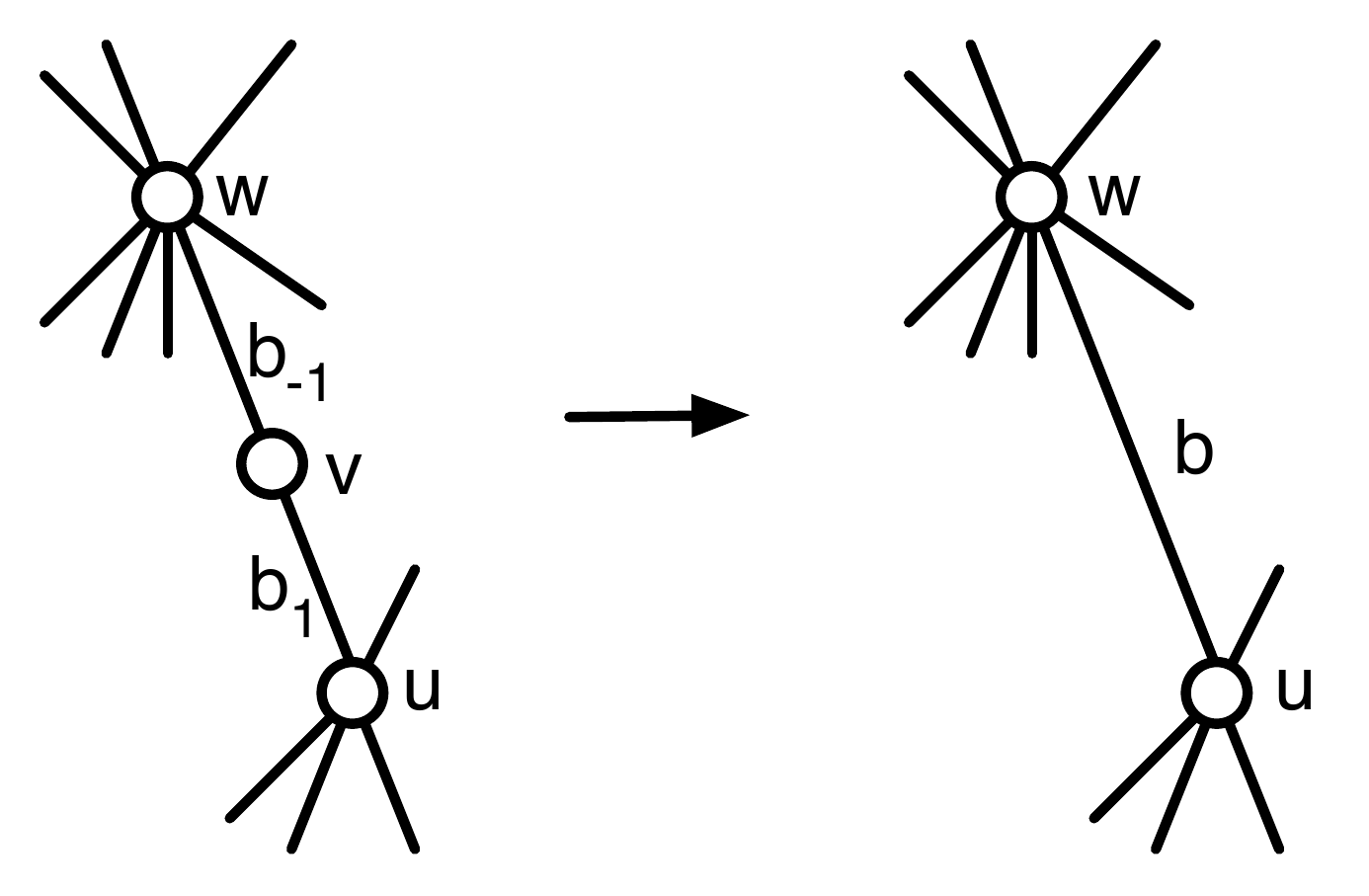}
\end{center}
	\caption{Local picture of a codegeneracy $s$}\label{degeneracy_picture}
\end{figure}

\begin{figure}
\begin{center}
	\includegraphics[width=.4\textwidth]{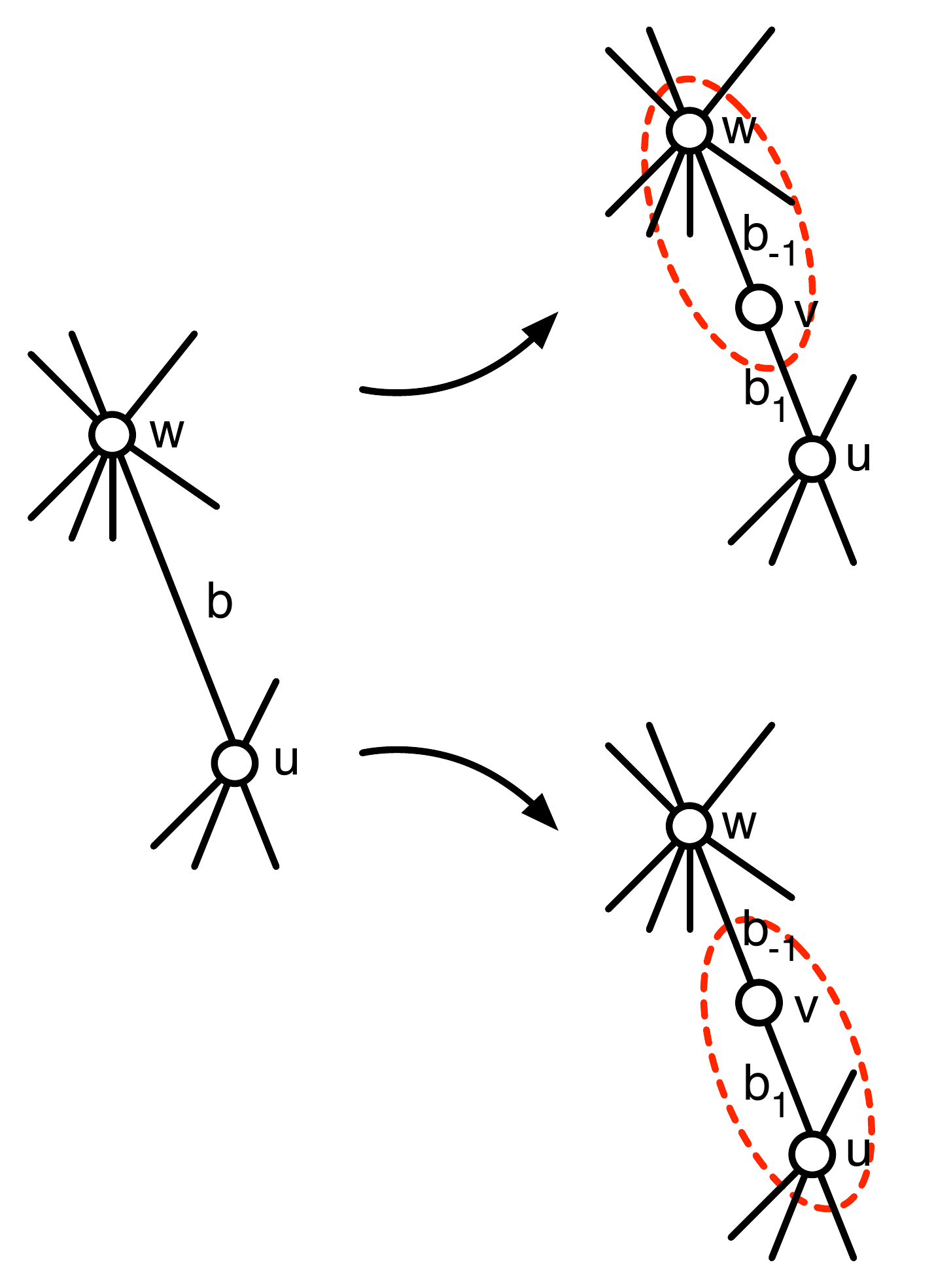}
\end{center}
	\caption{Two sections of $s$ from Figure \ref{degeneracy_picture}, corresponding to $e=b_1$ (top) and $e=b_{-1}$ (bottom).}\label{sections_picture}
\end{figure}

\begin{lemma}
\label{codegensection}
Suppose $s \colon \Gamma(G) \to \Gamma(H)$ is a codegeneracy and $e \in \edge(G)$.  Then there exists a coface map $f \colon \Gamma(H) \to \Gamma(G)$ such that
\begin{enumerate}
\item
$f$ is a section of $s$ (i.e., $sf = \Id_H$), and
\item
$e$ is in the image of $f \colon \edge(H) \to \edge(G)$.
\end{enumerate}
\end{lemma}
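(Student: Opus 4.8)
The plan is to build $f$ explicitly from the local data of $s$, precisely as suggested by Figures~\ref{degeneracy_picture} and~\ref{sections_picture}. First I would recall that a codegeneracy $s\colon \Gamma(G)\to\Gamma(H)$ is determined by a vertex $v\in\vertex(G)$ of profile $(1;1)$, with single input edge $b_1$ and single output edge $b_{-1}$: the graph $H$ is obtained from $G$ by deleting $v$ and identifying $b_1$ with $b_{-1}$ to a single edge $b$, so that $s_1(v)=b$, $s_0(b_1)=s_0(b_{-1})=b$, and $s$ is the identity on all remaining cells. I would fix notation for the possibly-absent vertex $w$ with $b_1\in\out(w)$ and the possibly-absent vertex $u$ with $b_{-1}\in\inp(u)$, noting that wheel-freeness of $G$ forces $b_1\neq b_{-1}$ and $w\neq u$.

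Since $s_0$ is a bijection away from $\{b_1,b_{-1}\}$, any edge $e\notin\{b_1,b_{-1}\}$ already lies in the image of every section; so it suffices to produce two sections whose images contain $b_1$ and $b_{-1}$ respectively, and then select the appropriate one. For the section with $b_1$ in its image I would set $f_0(b)=b_1$ (and $f_0$ the identity on the other edges), send $u$ to the two-vertex subgraph $f_1(u)=\{v \xrightarrow{b_{-1}} u\}$ of $G$, and send every other vertex $x$ to its corolla $C_x$. The symmetric choice $f_0(b)=b_{-1}$, $f_1(w)=\{w \xrightarrow{b_1} v\}$ yields the section with $b_{-1}$ in its image. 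When $u$ (resp.\ $w$) is absent, the first (resp.\ second) construction simplifies: $v$ is then an outer vertex, every vertex is sent to its corolla, and $f$ is the corresponding outer coface --- this is the ``slightly simpler'' case mentioned before the lemma.

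The verifications I would then carry out are threefold. First, that $f$ is a well-defined graphical map, which reduces to checking profile-matching across the single nontrivial vertex, e.g.\ $f_0(\inp_H(u))=\{b_1\}\cup(\inp_G(u)\setminus\{b_{-1}\})=\inp(f_1(u))$ together with the analogous identity for outputs. Second, that $f\in\Gamma^+$: this is immediate from (the wheel-free case of) Proposition~\ref{plus characterization}, since no $f_1(x)$ is an edge; in fact $f$ is a single coface map, namely the dioperadic coface un-contracting $b_{-1}$ (resp.\ $b_1$), or an outer coface in the degenerate cases. Third, that $sf=\Id_H$, which holds because $s$ contracts the edge $b_{-1}$ (resp.\ $b_1$) sitting inside $f_1(u)$ (resp.\ $f_1(w)$) back to $b$, and $s$ and $f$ are mutually inverse on all other cells. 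Finally, given the preselected $e$, I would take the first section when $e=b_1$ or $e\notin\{b_1,b_{-1}\}$, and the second when $e=b_{-1}$, which secures conclusion~(2).

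I expect the main obstacle to be the bookkeeping in the first verification: confirming that sending a single vertex of $H$ to a two-vertex subgraph of $G$ respects all input/output incidences and genuinely defines a coface map, while simultaneously organizing the four cases according to the presence or absence of $w$ and $u$ without losing track of which of $b_1,b_{-1}$ lands in the image.
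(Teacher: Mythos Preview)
Your proposal is correct and follows essentially the same explicit construction as the paper: choose which of $b_1,b_{-1}$ the section hits at $b$, send the adjacent vertex of $H$ to the corresponding two-vertex subgraph of $G$, and send every other vertex to its corolla (with the outer-coface simplification when that adjacent vertex is absent). The only discrepancies are cosmetic: your labels $u$ and $w$ are swapped relative to the paper's, and the resulting $f$ is an \emph{inner} coface (not ``dioperadic,'' which is wheeled terminology) in the generic case.
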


\begin{proof}
The codegeneracy map $s$ corresponds to a degenerate reduction $H = G(\uparrow)$, in which the exceptional edge $\uparrow$ is substituted into a vertex $v \in G$, and a corolla is substituted into every other vertex of $G$.  The vertex $v$ must have precisely one incoming edge $b_1$ and one outgoing edge $b_{-1}$. 
The set of edges of $H$ is the quotient
\[
\edge(H) = \dfrac{\edge(G)}{\left(b_{-1} \sim b_1\right)},
\]
and we write $b \in \edge(H)$ for the common image of $b_{-1}$ and $b_1$ under $s$.   The codegeneracy map $s$ is then given on edges by \[
s(a) = 
\begin{cases}
a & \text{ if $a \in \edge(G) \setminus \{b_{-1}, b_1\}$},\\
b & \text{ if $a \in \{b_{-1}, b_1\}$}
\end{cases}
\]
and on vertices by \[
s(u) = 
\begin{cases}
C_u & \text{ if $u \in \vertex(G) \setminus \{v\}$},\\
\uparrow_b & \text{ if $u = v$},
\end{cases}
\] where $C_u$ denotes the corolla with the same profiles as the vertex $u$. 
In particular, we have
\[
\vertex(H) = \vertex(G) \setminus \{v\}.
\]

Now we define $f \colon \Gamma(H) \to \Gamma(G)$ as follows.  For $a \in \edge(H)$, we define
\[
f(a) = 
\begin{cases}
a & \text{ if $a \not= b$},\\
b_{-1} & \text{ if $a=b$ and $e \not= b_1$},\\
b_1 & \text{ if $a=b$ and $e=b_1$}.
\end{cases}
\]
Suppose $x \in H$ is a vertex.
\begin{enumerate}
\item
Define $f(x) = C_x$ if $x$ is not adjacent to $b$.
\item
Suppose $x$ is the terminal vertex $w$ of $b$ (if such exists).  Then define
\[
f(x) = 
\begin{cases}
C_w & \text{ if $e \not= b_1$},\\
C_w \circ_{b_{-1}} C_v & \text{ if $e = b_1$},
\end{cases}
\]
where $C_w \circ_{b_{-1}} C_v$ is the subgraph of $G$ spanned by the vertices $w$ and $v$ with internal edge $b_{-1}$ (see \cite[Example 2.17]{hry}).
\item
Suppose $x$ is the initial vertex $u$ of $b$ (if such exists).  Then define
\[
f(x) = 
\begin{cases}
C_v \circ_{b_1} C_u & \text{ if $e \not= b_1$},\\
C_u & \text{ if $e = b_1$},
\end{cases}
\]
where $C_v \circ_{b_1} C_u$  is the subgraph of $G$ spanned by the vertices $v$ and $u$ with internal edge $b_1$.
\end{enumerate}
By construction, the (inner or outer) coface map $f$ has $e$ in its image.
Moreover, $f$ is a section of $s$ because substituting the exceptional edge $\uparrow$ into the vertex $v$ in $C_w \circ_{b_{-1}} C_v$ (resp., $C_v \circ_{b_1} C_u$) yields the corolla $C_w$ (resp., $C_u$).
\end{proof}

\begin{remark}\label{failure_for_gw}
	The analogue of Lemma \ref{codegensection} fails for $\Gammaw$.
	As an example, let $G = \xi_1^1C_{(1;1)}$ be the contracted corolla with one vertex and one (internal) edge. 
	Then $G(\uparrow) = {\wheel}$ gives the codegeneracy map $G \to {\wheel}$, but there are no maps at all from $\wheel$ to $G$.
	Another class of examples (which are also present in $\oldC$, rather than just $\oldB$) are given by gluing $C_{(n;m)}$ to $C_{(1;1)}$ and mapping to $\xi_i^jC_{(n;m)}$ as in Figure \ref{gwheel degen}.
\begin{figure}
	\begin{center}
   \includegraphics[width=.5\textwidth]{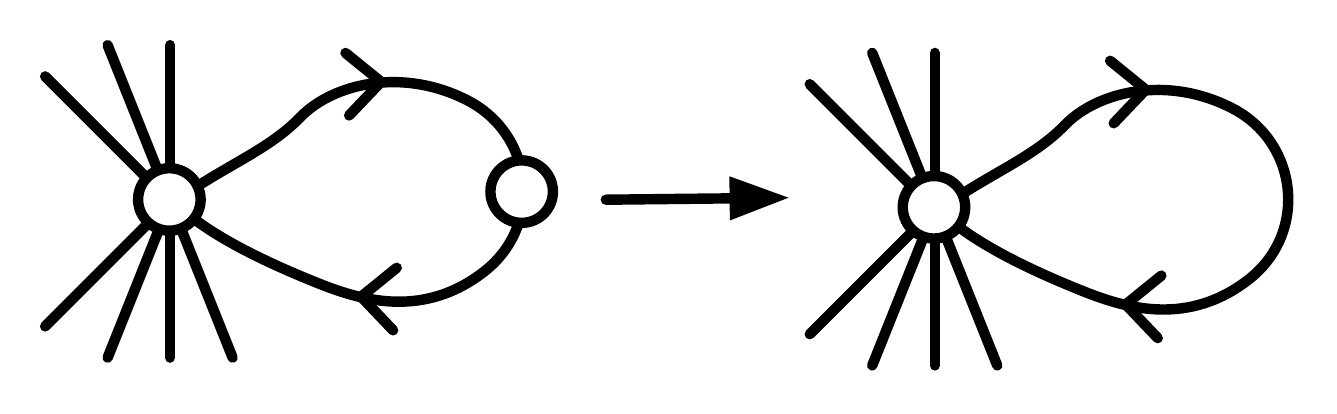}
   \end{center}
	\caption{A codegeneracy in $\Gammaw$ without a section}\label{gwheel degen}
\end{figure}
\end{remark}

Consider the function
\begin{align*}
	\Gamma(G, H) &\to \Set(\edge(G), \edge(H)) \\
	f &\mapsto f_0
\end{align*}
which is a monomorphism by \cite[Corollary 6.62]{hry}, i.e., $(-)_0$ is the morphism part of the (faithful) functor $\Gamma \to \Set$ which sends $G$ to $\edge(G)$.

If $f\colon X \to Y$ is a map in some category $\mathcal C$, write \[ \sections(f) = \{ g \, | \, fg = \Id_Y \} \subset \mathcal C(Y,X)\] for the set of sections of $f$.
Any functor $F$ out of $\mathcal C$ induces a function $\sections(f) \to \sections(Ff)$ by sending $g$ to $Fg$.

\begin{proposition}\label{sections_bijection}
	If $s\in \Gamma^-$, then  $\sections(s) \to \sections(s_0)$ is a bijection.
\end{proposition}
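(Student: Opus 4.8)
The plan is to handle injectivity and surjectivity of $(-)_0 \colon \sections(s) \to \sections(s_0)$ separately. Injectivity is immediate and holds for \emph{any} $s$: the assignment $f \mapsto f_0$ on $\Gamma(H,G)$ is a monomorphism by \cite[Corollary 6.62]{hry}, so two graphical sections of $s$ with the same underlying edge function must agree. Everything then reduces to surjectivity, namely showing that an arbitrary set-theoretic section $\phi$ of $s_0$ equals $f_0$ for some graphical section $f$ of $s$.

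Since $\Gamma^-$ is generated by isomorphisms and codegeneracies, I would write $s$ as a composite of such maps and induct on the number $n$ of codegeneracy factors. The isomorphism factors are harmless: precomposing a factorization with an isomorphism transports both $\sections(s)$ and $\sections(s_0)$ along mutually inverse bijections compatible with $(-)_0$, so one may assume no isomorphisms occur. This leaves the base case of a single codegeneracy and an inductive step for genuine composites.

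For a single codegeneracy $s \colon G \to H$ collapsing a bivalent vertex $v$ with input $b_1$ and output $b_{-1}$ to an edge $b$, the map $s_0$ merely identifies $b_1 \sim b_{-1}$ and is otherwise injective. Hence $s_0$ has exactly two set-sections, distinguished by whether $b$ maps to $b_1$ or to $b_{-1}$, and Lemma \ref{codegensection} applied with $e = b_1$ and with $e = b_{-1}$ produces coface sections realizing each of them; this settles the base case. For the inductive step I factor $s = s' \circ c$ with $c \colon G \to G'$ a single codegeneracy and $s'$ a composite of $n-1$ codegeneracies. Given $\phi \in \sections(s_0)$, the function $c_0 \phi$ satisfies $s'_0 (c_0 \phi) = \Id$, so the inductive hypothesis yields a graphical section $g$ of $s'$ with $g_0 = c_0 \phi$. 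Choosing any section $j$ of $c$ (again Lemma \ref{codegensection}) and setting $f := j g$ gives $cf = g$, hence $sf = s' g = \Id$, with $f_0 = j_0 c_0 \phi$; the task is to pick $j$ so that $j_0 c_0 \phi = \phi$.

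The heart of the argument --- and the step I expect to be the main obstacle --- is exactly this choice of $j$. The composite $j_0 c_0$ fixes every edge except the merged pair $b_1, b_{-1}$, both of which it sends to the single value $j_0(b) \in \{b_1, b_{-1}\}$. So I must know that $\phi$ takes at most one of the values $b_1, b_{-1}$, and then choose $j$ with $j_0(b)$ equal to that value. This compatibility is forced by $\phi$ being a section of $s_0$: if $\phi(x) = b_1$ and $\phi(y) = b_{-1}$, then applying $s_0$ gives $x = s_0(b_1) = s_0(b_{-1}) = y$, so $\phi$ cannot separate $b_1$ and $b_{-1}$. With $j$ chosen accordingly one checks $f_0 = \phi$, establishing surjectivity; together with injectivity this yields the desired bijection. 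Conceptually, this is the statement that a set-section of a composite of surjections factors as a composite of set-sections of the individual factors, each of which is realized graphically by Lemma \ref{codegensection}.
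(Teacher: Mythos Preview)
Your proposal is correct and follows essentially the same approach as the paper. Both arguments establish injectivity via \cite[Corollary 6.62]{hry}, peel off a single codegeneracy at the source (your $c$ is the paper's $s'$), apply the inductive hypothesis to the remaining composite, and then choose the section of the peeled-off codegeneracy according to which of the two identified edges lies in the image of the given set-section; the only cosmetic differences are that the paper starts its induction at the isomorphism case $m=0$ rather than at a single codegeneracy, and phrases the final step as a three-case analysis on $\alpha_0(e_0)$ rather than as your observation that $\phi$ cannot hit both $b_1$ and $b_{-1}$.
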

\begin{proof}
	Write $s \colon G_n \to G_{n-m}$ where the subscript denotes the degree of the graph (ie, the number of vertices).
	We have a commutative square
	\[ \begin{tikzcd}
	\sections(s) \rar[hook] \dar & \Gamma(G_{n-m}, G_n) \dar[hook]{\text{\cite[Corollary 6.62]{hry}}} \\
	\sections(s_0) \rar[hook] & \Set(\edge(G_{n-m}), \edge(G_n))
	\end{tikzcd} \]
	so the map in question is injective.
	We induct on $m$ to show that the map is surjective.
	If $m=0$, then $s$ is an isomorphism and $s_0$ is a bijection, hence $\sections(s)$ and $\sections(s_0)$ are both one element sets and surjectivity follows.

	Suppose that $m\geq 1$ and the result is known for $m-1$.
	By choosing a vertex $v$ with $s(v)$ an edge, we may factor $s$ as 
	\[ \begin{tikzcd}
	G_n \rar{s'} \arrow[rr, bend right, "s"] & G_{n-1} \rar{s''} 
		& G_{n-m}
	\end{tikzcd} \]
	with $G_{n-1} = G_n(\uparrow_v)$ and $s', s'' \in \Gamma^-$.

	Let $\alpha_0 \colon \edge(G_{n-m}) \to \edge(G_n)$ be a section of $s_0$.
	Then \[ \Id_{\edge(G_{n-m})} = s_0 \alpha_0 = (s_0'' s_0') \alpha_0 = s_0''(s_0'\alpha_0) \]
	so $s_0'\alpha_0 \in \sections(s_0'')$.
	By the induction hypothesis, we know there is a $\beta \in \sections(s'')$ so that $\beta_0 = s_0'\alpha_0$.

	The codegeneracy map $s'$ identifies exactly two edges $e_1$ and $e_2$ and is the identity elsewhere.
	Write $\bar e = s'_0(e_1) = s'_0(e_2)$ for the common image.
	For $i=1,2$, write $\gamma^i$ for the section of $s'$ (guaranteed by Lemma \ref{codegensection}) with $e_i = \gamma^i_0(\bar e)$.
	We claim that $(\gamma^1\beta)_0 = \alpha_0$ or $(\gamma^2\beta)_0 = \alpha_0$.
	This implies the result, for if $s_0(\gamma^i\beta)_0 = \Id$ then $s(\gamma^i\beta) = \Id$, hence $\sections(s) \to \sections(s_0)$ is surjective.

	We have that $\gamma_0^i s_0'(\tilde e) = \tilde e$ for $\tilde e\neq e_1, e_2$.
	If $e\neq s_0(e_1)$ then $\alpha_0(e) \notin \{e_1, e_2\}$, hence $(\gamma^i \beta)_0 (e) = \gamma^i_0 s_0' \alpha_0 (e) =  \alpha_0(e)$.
	We thus need to consider the case of $e_0 = s_0(e_1) = s_0(e_2)$ and the possible values of $\alpha_0(e_0)$.
	\begin{itemize}
		\item If $\alpha_0(e_0) = e_1$, then 
		\[
		 	(\gamma^1\beta)_0(e_0) = \gamma_0^1s_0'\alpha_0(e_0) = \gamma_0^1 s_0'(e_1) = \gamma_0^1(\bar e) = e_1 = \alpha_0(e_0).
		 \] 
		\item If $\alpha_0(e_0) = e_2$, then
		\[
			(\gamma^2\beta)_0(e_0) = \gamma_0^2s_0'\alpha_0(e_0) = \gamma_0^2 s_0'(e_2) = \gamma_0^2(\bar e) = e_2 = \alpha_0(e_0).
		\]
		\item If $\alpha_0(e_0) = e' \notin \{e_1, e_2\}$, then for $i=1,2$ we have
		\[
			(\gamma^i\beta)_0(e_0) = \gamma_0^i s_0' \alpha_0(e) = \gamma_0^i s_0'(e') = e' = \alpha_0(e).
		\]
	\end{itemize}
	Hence either $(\gamma^1 \beta)_0 = \alpha_0$ or $(\gamma^2\beta)_0 = \alpha_0$, and we have constructed a section hitting $\alpha_0$.
\end{proof}

\begin{definition}
\label{degengraphex}
Suppose $\sK \in \gupcset$ and $x \in \sK(G)$.  Then we say $x$ is \textbf{degenerate} if there exist
a non-empty composition of codegeneracy maps $\sigma \colon \Gamma(G) \to \Gamma(G')$ and 
$y \in \sK(G')$
such that
\begin{equation*}
\label{xsigmay} \tag{$\clubsuit$}
x = \sigma^*y.
\end{equation*}
If no such pair $(\sigma, y)$ exists, then $x$ is said to be \textbf{non-degenerate}.
\end{definition}

The following observation is the graphical version of the Eilenberg-Zilber lemma \cite{gz} (II.3 Proposition).

\begin{lemma}
\label{eilenbergzilber}
Suppose $\sK \in \gupcset$ and $x \in \sK(G)$.  The element $x$ is degenerate if and only if 
there exists a unique pair $(\sigma, y)$ as in \eqref{xsigmay} with $y$ non-degenerate.
\end{lemma}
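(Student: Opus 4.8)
The plan is to follow the classical Eilenberg-Zilber lemma \cite{gz}, adapting the argument to the graphical setting by means of the section results established above. The backward implication is immediate: if $(\sigma, y)$ is any pair as in \eqref{xsigmay} with $\sigma$ a non-empty composition of codegeneracies, then $x = \sigma^* y$ exhibits $x$ as degenerate, regardless of whether $y$ is non-degenerate or whether the pair is unique. So the content lies entirely in the forward implication, which I would split into existence and uniqueness of the pair.

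For existence, I would range over all factorizations $x = \sigma^* y$ in which $\sigma \colon \Gamma(G) \to \Gamma(G')$ is a (possibly empty) composition of codegeneracies and $y \in \sK(G')$, and select one for which $\deg(G')$ is minimal; such a minimum exists because codegeneracies strictly lower the degree and the degree is a non-negative integer. Minimality forces $y$ to be non-degenerate, since any further factorization $y = \tau^* z$ through a non-empty $\tau$ would yield $x = (\tau\sigma)^* z$ with a strictly smaller target. Finally, if $x$ is degenerate to begin with, then the minimal target has degree strictly below $\deg(G)$, so the resulting $\sigma$ is non-empty, as required by \eqref{xsigmay}.

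The heart of the matter is uniqueness, and this is where I expect the real difficulty. Suppose $x = \sigma^* y = \sigma'^* y'$ with $\sigma \colon \Gamma(G) \to \Gamma(G')$ and $\sigma' \colon \Gamma(G) \to \Gamma(G'')$ both non-empty compositions of codegeneracies and $y, y'$ non-degenerate. Choosing a coface section $\delta$ of $\sigma$ via (the iterated form of) Lemma \ref{codegensection}, one computes $y = \delta^* x = (\sigma'\delta)^* y'$. Factoring $\sigma'\delta = g h$ with $g \in \Gamma^+$ and $h \in \Gamma^-$ through the generalized Reedy structure, the non-degeneracy of $y$ forces $h$ to be an isomorphism, so $\sigma'\delta \in \Gamma^+$ and hence $\deg(G') \leq \deg(G'')$. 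The symmetric argument, interchanging the roles of the two factorizations, gives the reverse inequality, so $\deg(G') = \deg(G'')$ and $\sigma'\delta$ is a degree-preserving map in $\Gamma^+$, i.e.\ an isomorphism $\phi$ with $y = \phi^* y'$.

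It remains to promote this isomorphism of targets to an actual identification of the pairs, and this is the step I expect to be most delicate. Here I would exploit the tools assembled above: the map $\sigma^*$ is split injective (with splitting $\delta^*$), the functor $(-)_0 \colon \Gamma \to \Set$ is faithful, and, crucially, Proposition \ref{sections_bijection} identifies sections of a codegeneracy with sections of its underlying map of edge sets. Transporting the problem to the rigid, set-theoretic level of edge maps in this way should pin down $\phi$ and force the two codegeneracy composites to agree, yielding the desired uniqueness. The main obstacle is exactly this transfer of rigidity across $(-)_0$: one must rule out the ambiguity introduced by automorphisms of the target graph, and it is precisely Proposition \ref{sections_bijection} together with the faithfulness of $(-)_0$ that I would rely on to make this possible.
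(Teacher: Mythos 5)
Your backward implication, existence argument, and the first half of uniqueness are all sound and match the paper's strategy: taking a coface section of $\sigma$, invoking the Reedy factorization, using non-degeneracy of $y$ to kill the inverse part, and comparing degrees symmetrically is exactly what the paper does. The gap is precisely where you flag it, and it is a real gap, not just a delicate finish. First, your argument stops at ``$\sigma'\delta$ is an isomorphism $\phi$ with $y=\phi^*y'$,'' which only identifies the two targets up to isomorphism; to get literal equality of the pairs you must show $\phi=\Id$. The paper does this by observing that $\sigma'\delta$ is a composite of coface maps followed by codegeneracy maps, and such a composite, once known to be an isomorphism, is forced to be the identity. Without this upgrade you cannot conclude $y=y'$, nor can you conclude that any section of $\sigma$ is a section of $\sigma'$ -- which undercuts the route you sketch next.

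Second, for $\sigma=\sigma'$ the tool you reach for is not the one that closes the argument. Proposition \ref{sections_bijection} identifies $\sections(s)$ with $\sections(s_0)$ for a single $s\in\Gamma^-$; to deduce $\sigma=\sigma'$ from section data you would need to know that $\sigma$ and $\sigma'$ have the \emph{same} set of sections (so that $\sigma_0$ and $\sigma'_0$, as surjections of sets with equal section sets, coincide), and establishing that is essentially the statement you are trying to prove. The ingredient the paper actually uses is the second clause of Lemma \ref{codegensection}: for each edge $e\in\edge(G)$ one may choose a section $g$ of $\sigma$ whose image contains $e$, say $e=g(e')$, and then $\sigma'(e)=\sigma'g(e')=e'=\sigma g(e')=\sigma(e)$, using $\sigma'g=\Id$ (which again requires the identity upgrade from the previous point). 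Since codegeneracies are determined by their action on edges, this gives $\sigma=\sigma'$ edge by edge. So the two missing ideas are: the ``isomorphism implies identity'' step for composites of cofaces followed by codegeneracies, and the use of sections through a \emph{pre-selected} edge rather than the global comparison of section sets.
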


\begin{proof}
Suppose $x$ is degenerate.  If $s \colon \Gamma(G) \to \Gamma(G')$ is a codegeneracy map, then
\[
|\vertex(G')|=|\vertex(G)| - 1.
\]
Since $|\vertex(G)|$ is finite, a finite induction shows that there exists a pair $(\sigma,y)$ as in \eqref{xsigmay} such that $y \in \sK(G')$ is non-degenerate.  To show that this pair is unique, suppose for $i=1,2$ we have
\[
\sigma_i \colon \Gamma(G) \to \Gamma(G_i), y_i \in \sK(G_i)
\]
satisfying $\sigma_i^*y_i = x$ with $y_i$ nondegenerate.  We first show that $y_1 = y_2$.

By repeatedly using Lemma \ref{codegensection}, the non-empty composition $\sigma_1$ of codegeneracy maps has a section $f$, which is a non-empty composition of coface maps.  Then
\[
\begin{aligned}
y_1 
&= (\sigma_1 f)^*y_1\\
&= f^*\sigma_1^*y_1\\
&= f^*x\\
&= f^*\sigma_2^*y_2\\
&= (\sigma_2 f)^*y_2.
\end{aligned}
\]
To show that $y_1=y_2$, it suffices to show that $\sigma_2f = \Id$.  By \cite[Theorem 6.57]{hry} the map
\[
\Gamma(G_1) \xrightarrow{\sigma_2 f}  \Gamma(G_2)
\]
has a decomposition into codegeneracy maps, followed by an isomorphism, then followed by coface maps.  However, since $y_1$ is non-degenerate, it follows that $\sigma_2 f$ is a composition of an isomorphism followed by  coface maps.  Since a coface map increases the number of vertices by $1$, we have $|\vertex(G_1)| \leq |\vertex(G_2)|$.  

A symmetric argument gives the reverse inequality, hence 
\[
|\vertex(G_1)| = |\vertex(G_2)|.
\]
This implies that $\sigma_2 f$ is an isomorphism.  Since $\sigma_2f$ is a composition of coface maps followed by codegeneracy maps, we have 
\begin{equation}\label{sigma2f}
	\sigma_2f = \Id.
\end{equation}
Thus $G_1 = G_2$ and $y_1 = y_2$.

To see that $\sigma_1$ and $\sigma_2$ are the same map,
it suffices to show that they are equal on $\edge(G)$ because codegeneracy maps are uniquely determined by their actions on edge sets.  
We will show that $\sigma_1$ and $\sigma_2$ agree on an arbitrary edge $e\in \edge(G)$ by using the second part of Lemma \ref{codegensection}.
That is, we know there exists a section $g \colon \Gamma(G_1) \to \Gamma(G)$ of $\sigma_1$ such that
\begin{itemize}
\item
$g$ is a non-empty composition of coface maps, and
\item
$e$ is in the image of $g$, say, $e = g(e')$ for some $e' \in \edge(G_1)$. 
\end{itemize}
Then we have
\[
\sigma_2(e) = \sigma_2 g(e') \overset{\eqref{sigma2f}}{=} e' = \sigma_1 g(e') = \sigma_1(e).
\]
\end{proof}

A commutative square
\[ \begin{tikzcd}
c_1 \rar \dar & c_2 \dar \\
c_3 \rar & c_4
\end{tikzcd} \]
in a category $\mathcal R$ is a \textbf{strong}, or \textbf{absolute}, \textbf{pushout} (see \cite[Definition 5]{bergnerrezk} or \cite[Remark 6.6]{bm}) if its image under the Yoneda embedding 
\[ \begin{tikzcd}
\hom(-,c_1) \rar \dar & \hom(-,c_2) \dar \\
\hom(-,c_3) \rar & \hom(-,c_4)
\end{tikzcd} \]
is a pushout in $\Set^{\mathcal R^{op}}$.

\begin{lemma}\label{strong pushouts}
	If $s_1\colon \Gamma(G) \to \Gamma(H_1)$ and $s_2\colon \Gamma(G) \to \Gamma(H_2)$ are iterated codegeneracy maps, then there exists a strong pushout
	\[ \begin{tikzcd}
	\Gamma(G) \rar{s_1} \dar{s_2} & \Gamma(H_1) \dar{f_1} \\
	\Gamma(H_2) \rar{f_2} & \Gamma(H).
	\end{tikzcd} \]
	Moreover, if $s_1 \neq s_2$, then $f_1$ and $f_2$ are also iterated codegeneracies.
\end{lemma}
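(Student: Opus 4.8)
The plan is to exhibit an explicit combinatorial candidate for the pushout corner and then verify the strong pushout property by passing to presheaves, where it collapses to a single statement provable with the graphical Eilenberg--Zilber lemma.

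\textbf{The candidate square.} Recall (as in the proof of Lemma \ref{codegensection}) that a codegeneracy contracts a single linear vertex, i.e.\ a vertex with one input and one output. Hence an iterated codegeneracy $s_i$ out of $G$ is determined by the set $W_i \subseteq \vertex(G)$ of linear vertices it contracts, and $H_i$ is the graph $G/W_i$ obtained by contracting the vertices in $W_i$. Since contracting linear vertices of a wheel-free graph again yields a wheel-free graph, I will set $H := G/(W_1 \cup W_2)$ and let $f_1 \colon H_1 \to H$ and $f_2 \colon H_2 \to H$ be the iterated codegeneracies contracting the images of $W_2 \setminus W_1$ and $W_1 \setminus W_2$, respectively (these are well defined because a vertex stays linear after contracting other vertices, and the empty composite $\Id$ is allowed). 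The square commutes since $f_1 s_1 = f_2 s_2$ is the contraction $p \colon G \to H$ along $W_1 \cup W_2$. This also settles the ``moreover'': when $s_1 \neq s_2$ we have $W_1 \neq W_2$, so at least one of $W_2 \setminus W_1$, $W_1 \setminus W_2$ is nonempty, and in any case both $f_1, f_2$ lie in $\Gamma^-$.

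\textbf{Reduction to presheaves.} By the definition of strong pushout and the Yoneda lemma (using $\gupcset(\Gamma[c], \sK) = \sK(c)$), the square is a strong pushout exactly when, for every $\sK \in \gupcset$, the induced square
\[ \begin{tikzcd}
\sK(H) \rar{f_1^*} \dar[swap]{f_2^*} & \sK(H_1) \dar{s_1^*} \\
\sK(H_2) \rar[swap]{s_2^*} & \sK(G)
\end{tikzcd} \]
is a pullback in $\Set$. As $f_1$ is an iterated codegeneracy it is a split epimorphism (iterate Lemma \ref{codegensection}), so $f_1^*$ is a split monomorphism; the same holds for $s_1^*$, $s_2^*$, and $p^* = s_1^* f_1^* = s_2^* f_2^*$. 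Thus every map in sight is injective, so uniqueness in the pullback is automatic. Identifying $\sK(H_1), \sK(H_2)$ with the subsets $\operatorname{im}(s_1^*), \operatorname{im}(s_2^*)$ of $\sK(G)$, the fibre product becomes the intersection $\operatorname{im}(s_1^*) \cap \operatorname{im}(s_2^*)$, while $\sK(H)$ is identified via $p^*$ with $\operatorname{im}(p^*)$, and the comparison map becomes the inclusion $\operatorname{im}(p^*) \subseteq \operatorname{im}(s_1^*) \cap \operatorname{im}(s_2^*)$. So the square is a pullback for this $\sK$ iff that inclusion is an equality, and a strong pushout iff
\[ \operatorname{im}(s_1^*) \cap \operatorname{im}(s_2^*) = \operatorname{im}(p^*) \subseteq \sK(G) \]
holds for all $\sK$.

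\textbf{The key equation via Eilenberg--Zilber.} The inclusion $\operatorname{im}(p^*) \subseteq \operatorname{im}(s_1^*) \cap \operatorname{im}(s_2^*)$ is immediate from $p = f_1 s_1 = f_2 s_2$. For the reverse, I will take $y \in \operatorname{im}(s_1^*) \cap \operatorname{im}(s_2^*)$ and apply Lemma \ref{eilenbergzilber} to write $y = \sigma^* z$ uniquely, with $\sigma \colon G \to G^\flat$ an iterated codegeneracy and $z$ non-degenerate. Writing $y = s_1^* x_1$ and decomposing $x_1 = \tau^* z_1$ by the same lemma exhibits $y = (\tau s_1)^* z_1$ as a second non-degenerate representation, so uniqueness forces $\sigma = \tau s_1$ up to isomorphism; hence the set of edges identified by $\sigma$ contains those identified by $s_1$, i.e.\ $W_1$ lies in the vertex set contracted by $\sigma$. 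The symmetric argument with $s_2$ gives the same for $W_2$, so $\sigma$ contracts at least $W_1 \cup W_2$ and therefore factors as $\sigma = \tau' p$ for an iterated codegeneracy $\tau'$. Then $y = p^*(\tau'^* z) \in \operatorname{im}(p^*)$, completing the equation and hence the proof.

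\textbf{Main obstacle.} The construction and the reduction are formal; the crux is the reverse inclusion, where Eilenberg--Zilber uniqueness is used to promote ``degenerate along $s_1$ and along $s_2$'' to ``degenerate along $p$''. The one delicate point is that the factorizations from Lemma \ref{eilenbergzilber} are unique only up to isomorphism, so I must extract the containment of $W_i$ in the vertices contracted by $\sigma$ in an isomorphism-invariant way, namely through the set of edges $\sigma$ identifies. I expect this to be the only genuinely non-formal step.
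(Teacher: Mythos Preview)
Your proof is correct and follows essentially the same strategy as the paper: construct the candidate pushout $H$ by contracting the union of the relevant $(1;1)$-vertices, then verify the strong pushout property presheaf-by-presheaf using the graphical Eilenberg--Zilber lemma (Lemma~\ref{eilenbergzilber}). Two minor differences are worth noting: the paper's written proof treats only \emph{single} codegeneracies $s_1, s_2$ (appealing later to ``repeated application'' in Theorem~\ref{EZ2 theorem}), whereas you handle the iterated case directly via the sets $W_i$; and where the paper builds an explicit inverse $\Phi$ to the comparison map, you instead observe that injectivity of all the relevant maps reduces the pullback condition to the image equality $\operatorname{im}(p^*) = \operatorname{im}(s_1^*) \cap \operatorname{im}(s_2^*)$ --- a clean repackaging of the same verification. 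Your caution about ``up to isomorphism'' is unnecessary here, since the paper's Lemma~\ref{eilenbergzilber} gives literal uniqueness of $(\sigma, z)$, so $\sigma = \tau s_1$ on the nose and $W_1 \subseteq W_\sigma$ follows immediately without passing through edge identifications.
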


\begin{proof}
	If $s_1 = s_2$, take $f_1=f_2 = \Id_{H_1}$, and the above square is a strong pushout. For the remainder of the proof, suppose that $s_1 \neq s_2$.

	The maps $s_i$, $i=1,2$, correspond to graph substitutions
	\[
		H_i = G(\uparrow_{v_i})
	\]
	where $\uparrow_{v_i}$ is inserted at the vertex $v_i$.
	We have $v_1 \neq v_2$ since $s_1 \neq s_2$, and we define
	\[
		H = G(\{ \uparrow_{v_1}, \uparrow_{v_2} \}).
	\]
	By associativity of graph substitution, we have a commutative square
	\[ \begin{tikzcd}
	\Gamma(G) \rar{s_1} \dar{s_2} & \Gamma(H_1) \dar{f_1} \\
	\Gamma(H_2) \rar{f_2} & \Gamma(H).
	\end{tikzcd} \]
	with $f_1$ and $f_2$ are the codegeneracy maps associated to the graph substitutions $H=H_1(\uparrow_{v_2})$ and $H=H_2(\uparrow_{v_1})$, respectively.

	We now must show that the commutative square
	\[ \begin{tikzcd}
	\Gamma[G] \rar{s_1} \dar{s_2} & \Gamma[H_1] \dar{f_1} \\
	\Gamma[H_2] \rar{f_2} & \Gamma[H]
	\end{tikzcd} \]
	is a pushout in $\gupcset$.
	Equivalently, it is enough to show, for any $\sK\in \gupcset$, that
	\[ \begin{tikzcd}
	\sK(H) \rar{f_1^*} \dar{f_2^*} & \sK(H_1) \dar{s_1^*} \\
	\sK(H_2) \rar{s_2^*} & \sK(G)
	\end{tikzcd} \]
	is a pullback in $\Set$.
	We construct a map
	\[
		\Phi\colon \sK(H_1) \underset{\sK(G)}\times \sK(H_2) \to \sK(H)
	\]
	which is inverse to $f_1^* \times f_2^*$.
	Suppose that we have a pair $(x_1, x_2) \in \sK(H_1) \times \sK(H_2)$ with $s_1^*x_1 = s_2^*x_2 \in \sK(G)$.
	This element is degenerate, so by Lemma \ref{eilenbergzilber} there exists a unique pair $(\sigma, y)$ with $\sigma\colon \Gamma(G) \to \Gamma(K)$ a composition of codegeneracy maps, $y$ nondegenerate, and $\sigma^*y=s_i^*x_i$.
	There exists a map $\tilde \sigma\colon \Gamma(H) \to \Gamma(K)$ which is a (possibly empty) composition of codegeneracy maps
	\[ \begin{tikzcd}
	& \Gamma(G) \arrow{dl}[swap]{s_1} \arrow{dr}{s_2} & \\
	\Gamma(H_1) \arrow{dr}[swap]{f_1} & & \Gamma(H_2) \arrow{dl}{f_2} \\
	& \Gamma(H) \dar{\tilde \sigma} &\\
	&\Gamma(K)
	\end{tikzcd} \]
	and $\sigma = \tilde \sigma f_1 s_1 = \tilde \sigma f_2 s_2$.
	Define
	\[
		\Phi(x_1, x_2) = \tilde \sigma^* y \in \sK(H).
	\]
	Then $s_i^* x_i = s_i^* f_i^* \tilde \sigma^* y = s_i^* f_i^* \Phi(x_1, x_2)$ so using any section $d_i$ of $s_i$ (Lemma \ref{codegensection}) we get $x_i = f_i^* \Phi(x_1, x_2)$. 
	Thus $(f_1^* \times f_2^*) \circ \Phi = \Id$.
	On the other hand, suppose that $w\in \sK(H)$.
	If $w = \gamma^* z$ for some nondegenerate $z$ as in Lemma \ref{eilenbergzilber} (where $\gamma$ is a possibly empty composition of codegeneracy maps), then $s_i^*f_i^* w = (f_i s_i)^* \gamma^* z = (\gamma f_i s_i)^* z$, and we see that
	\[
		\Phi(f_1^* w, f_2^* w) = \gamma^* z = w,
	\]
	hence $\Phi\circ (f_1^* \times f_2^*) = \Id$.
\end{proof}

Recall the following from \cite[6.7]{bm}.

\begin{definition}\label{def:EZ cat}
	An \textbf{EZ-category} is a small category, equipped with a degree function from the set of objects to $\mathbb{N}$, such that
	\begin{enumerate}
		\item monomorphisms preserve (resp. raise) the degree if and only if they are invertible (resp. non-invertible);
		\item every morphism factors as a split epimorphism followed by a monomorphism;
		\item any pair of split epimorphisms with common domain has a strong pushout.
	\end{enumerate}
\end{definition}

Any EZ-category $\mathcal R$ is automatically a Reedy category with $\mathcal R^+$ the subcategory of monomorphisms and $\mathcal R^-$ the subcategory of split epimorphisms. This fact will be used in the proof of Theorem \ref{gammaw not ez} to show that $\Gammaw$ cannot be an EZ-category.

The proof of the following lemma rests on a simple fact. In any category, if we have a composition $h = g\circ f$, then
\begin{itemize}
 	\item if $f$ is not a monomorphism then neither is $h$, and
 	\item if $g$ does not admit a section, then neither does $h$.
 \end{itemize} 

\begin{lemma}\label{reverse inclusions}
The set of split epimorphisms is contained in $\Gamma^-$ and the set of monomorphisms is contained in $\Gamma^+$.
\end{lemma}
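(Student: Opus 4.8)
The plan is to handle the two inclusions separately, each time by taking an arbitrary map $f$, applying the generalized Reedy factorization $f = gh$ with $g \in \Gamma^+$ and $h \in \Gamma^-$ (available because $\Gamma$ is a generalized Reedy category, \cite[6.70]{hry}), and then using one of the two highlighted cancellation facts to force the appropriate factor to be invertible.

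Before doing this I would record two auxiliary observations. First, every map in $\Gamma^+$ is a monomorphism: coface maps and isomorphisms are injective on edge-sets, and since the faithful functor $\Gamma \to \Set$, $G \mapsto \edge(G)$, reflects monomorphisms, each such map is a monomorphism; as $\Gamma^+$ is the subcategory these generate and monomorphisms are closed under composition, the claim follows. Second, no codegeneracy map is a monomorphism: a codegeneracy $s$ identifies two distinct edges $b_1, b_{-1}$, and Lemma \ref{codegensection} supplies two sections $\gamma^1, \gamma^2$ whose edge-images contain $b_1$ and $b_{-1}$ respectively. These sections are distinct, yet $s\gamma^1 = \Id = s\gamma^2$, so $s$ is not left-cancellable. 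This is the graphical heart of the argument, and the step I would watch most carefully: it is exactly the freedom in Lemma \ref{codegensection} to prescribe an edge in the section's image that produces two distinct sections.

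For the inclusion of split epimorphisms into $\Gamma^-$, I would take $f$ admitting a section and factor $f = gh$. Since the composite $f = g \circ h$ admits a section, the second cancellation fact shows the left factor $g$ admits a section, i.e. $g$ is a split epimorphism. But $g \in \Gamma^+$ is also a monomorphism by the first observation, and a monomorphism which is a split epimorphism is invertible; hence $g$ is an isomorphism and $f = gh$ lies in $\Gamma^-$.

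For the inclusion of monomorphisms into $\Gamma^+$, I would take $f$ a monomorphism and factor $f = gh$. The first cancellation fact shows the right factor $h \in \Gamma^-$ is a monomorphism. Writing $h = i\sigma$ with $i$ an isomorphism and $\sigma = s_k \cdots s_1$ a composition of codegeneracies (\cite[Theorem 6.57]{hry}), if $h$ were non-invertible then $\sigma$ would be nonempty; since its rightmost factor $s_1$ is not a monomorphism, two applications of the first cancellation fact show first that $\sigma$ and then that $h$ fails to be a monomorphism, a contradiction. Thus $h$ is an isomorphism and $f = gh$ lies in $\Gamma^+$. The remaining ingredients are purely formal (reflection of monomorphisms by a faithful functor, the two cancellation facts, and the fact that a monomorphism which is a split epimorphism is an isomorphism), so I expect the only genuine work to be the non-monomorphy of codegeneracies recorded above.
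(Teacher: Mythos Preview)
Your proof is correct, and the monomorphism direction is essentially identical to the paper's. The split-epimorphism direction, however, is handled differently. The paper argues directly that individual coface maps admit no section (treating inner and outer cofaces by a small case analysis: inner cofaces are not surjective on edges, while for an outer coface any putative section has nowhere to send the deleted vertex), and then concludes that in the factorization $f = \partial i \sigma$ the coface part $\partial$ must be the identity. You instead prove the stronger preliminary fact that every map in $\Gamma^+$ is a monomorphism (via edge-injectivity and reflection of monomorphisms along the faithful edge functor --- this is exactly the content of the paper's subsequent Lemma~\ref{mono edge}), and then use the purely categorical observation that a monomorphism which is also a split epimorphism is invertible. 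Your route is cleaner in that it avoids the case split on coface types, at the cost of front-loading what the paper establishes separately as Lemma~\ref{mono edge}; the paper's route keeps the two lemmas logically independent.
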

\begin{proof}
	First observe two facts.
	\begin{itemize}
		\item Codegeneracies are not monomorphisms. A codegeneracy $s$ admits two distinct sections $d^1$ and $d^{-1}$ (corresponding to $b_1$ and $b_{-1}$) according to Lemma \ref{codegensection}; but then $sd^1 = \Id = sd^{-1}$,  
		hence $s$ is not a monomorphism.
		\item Coface maps do not admit sections. This is clear if $f$ is not surjective on edges (in particular, if $f$ is an inner coface map).
		Using the notation of \cite[Definition 6.8]{hry}, 
		if $f$ is a outer coface map, then a hypothetical section $\beta$ would be forced on $H_w$, but has nowhere to send $u$.
	\end{itemize}
	Let $f\colon G \to K$ be a map, and factor as $f = \partial i \sigma$ as in \cite[Lemma 6.60]{hry} with $\partial$ a composition of coface maps, $i$ an isomorphism, and $\sigma$ a composition of codegeneracy maps.
	By the above, if $f$ is a monomorphism, then $\sigma = \Id$, hence $f\in \Gamma^+$. Similarly, if $f$ admits a section, then $\partial = \Id$, hence $f\in \Gamma^-$.
\end{proof}

\begin{lemma}\label{mono edge}
	If $f \colon G \to K$ is an injection on edge sets, then $f$ is a monomorphism.
\end{lemma}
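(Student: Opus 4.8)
The plan is to exploit the faithful functor $(-)_0 \colon \Gamma \to \Set$ recorded immediately before this lemma, which sends a graph $G$ to its edge set $\edge(G)$ and a map $f$ to the function $f_0$. The only idea needed is the standard categorical observation that a faithful functor reflects monomorphisms, so the entire proof reduces to transporting the hypothesis from $\Set$ back to $\Gamma$.

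Concretely, I would take two maps $a, b \colon X \to G$ in $\Gamma$ with $fa = fb$ and aim to show $a = b$. Applying the functor $(-)_0$ and using functoriality yields $f_0 a_0 = f_0 b_0$ in $\Set$. The hypothesis that $f$ is an injection on edge sets says precisely that $f_0$ is a monomorphism in $\Set$, so it cancels on the left to give $a_0 = b_0$. Finally, faithfulness of $(-)_0$ (established via \cite[Corollary 6.62]{hry}) forces $a = b$, which is exactly the assertion that $f$ is a monomorphism.

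I expect essentially no obstacle here: all of the combinatorial content has already been absorbed into the faithfulness of $(-)_0$, and the remaining step is the elementary fact that monomorphisms are reflected along any faithful functor. In particular, no case analysis on coface, codegeneracy, or isomorphism pieces is required, and the argument makes no reference to the decompositions used elsewhere in this section.
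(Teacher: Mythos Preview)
Your proposal is correct and is essentially the paper's own proof: the paper shows that $\Gamma(H,G)\to\Gamma(H,K)$ is injective for every $H$ by sandwiching it in the commutative square with $\Set(\edge(H),\edge(G))\to\Set(\edge(H),\edge(K))$, using \cite[Corollary 6.62]{hry} for the vertical injections and the hypothesis for the bottom one. Your elementwise phrasing (take $a,b$ with $fa=fb$, apply $(-)_0$, cancel, use faithfulness) is just the standard unpacking of that diagram chase, and invokes exactly the same ingredients.
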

\begin{proof}
	Let $H$ be an arbitrary object of $\Gamma$.
	By \cite[Corollary 6.62]{hry} the vertical maps in the commutative diagram are injections
	\[ \begin{tikzcd}
	\Gamma(H,G) \rar \dar[hook] & \Gamma(H,K) \dar[hook] \\
	\Set(\edge(H), \edge(G)) \rar[hook] & \Set(\edge(H), \edge(K))
	\end{tikzcd} \]
	while the bottom map is an injection by assumption.
	Thus the top map is an injection as well. Since $H$ was arbitrary, $f$ is a monomorphism.
\end{proof}

\begin{theorem}\label{EZ2 theorem}
	The graphical category $\Gamma$ is an EZ-category with \[ \deg \Gamma(G) = |\vertex(G)|. \]
\end{theorem}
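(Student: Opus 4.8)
The plan is to verify the three defining conditions of Definition \ref{def:EZ cat}, and the linchpin is the following identification of the two relevant classes of maps:
\[
\Gamma^+ = \{ \text{monomorphisms} \}, \qquad \Gamma^- = \{ \text{split epimorphisms} \}.
\]
The inclusions of the monomorphisms into $\Gamma^+$ and of the split epimorphisms into $\Gamma^-$ are exactly Lemma \ref{reverse inclusions}. For the reverse inclusions I would argue that each coface map is injective on edge sets---hence a monomorphism by Lemma \ref{mono edge}---and that each codegeneracy map is a split epimorphism by Lemma \ref{codegensection}. Since $\Gamma^+$ (resp. $\Gamma^-$) is generated under composition by isomorphisms together with coface (resp. codegeneracy) maps, and since both monomorphisms and split epimorphisms are closed under composition, this yields $\Gamma^+ \subseteq \{\text{mono}\}$ and $\Gamma^- \subseteq \{\text{split epi}\}$, giving the two equalities.

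With this in hand, condition (1) is a degree count. A monomorphism lies in $\Gamma^+$, hence is a composite of an isomorphism with coface maps; isomorphisms preserve $|\vertex(-)|$, while each non-invertible coface map in $\Gamma$ strictly increases $|\vertex(-)|$. (In the wheel-free setting there are no contracting coface maps, which are precisely the maps that add an edge while fixing $|\vertex(-)|$: removing an inner edge from a connected wheel-free graph disconnects it, so no such subgraph inclusion exists.) Consequently a monomorphism preserves the degree exactly when it is invertible and strictly raises it exactly when it is not, which is condition (1).

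Condition (2) is then immediate. Since $\Gamma$ is a generalized Reedy category \cite[6.70]{hry}, condition (iii) of Definition \ref{def:greedy} factors any $f$ as $f = gh$ with $g \in \Gamma^+$ and $h \in \Gamma^-$; under the identification above this is exactly a split epimorphism $h$ followed by a monomorphism $g$.

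The remaining point, condition (3), is where I expect the only real (if still routine) work. Given split epimorphisms $s_1, s_2$ with common domain, I would write $s_i = \varphi_i \sigma_i$ with $\sigma_i$ an iterated codegeneracy and $\varphi_i$ an isomorphism, using the decomposition of \cite[Theorem 6.57]{hry} together with $\Gamma^- = \{\text{split epi}\}$. Lemma \ref{strong pushouts} supplies a strong pushout of the pair $(\sigma_1, \sigma_2)$, and I would obtain a strong pushout of $(s_1, s_2)$ by adjusting the two legs of that square by the isomorphisms $\varphi_i$, using the standard fact that replacing two corners of a pushout square by isomorphic objects (and correcting the maps accordingly) again yields a pushout---hence a strong pushout, as the property is tested after the Yoneda embedding. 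The main obstacle is thus organizational, namely reducing condition (3) to Lemma \ref{strong pushouts} by absorbing the isomorphism components $\varphi_i$, rather than conceptual: all the genuine combinatorics has already been packaged into Lemmas \ref{codegensection}, \ref{eilenbergzilber}, and \ref{strong pushouts}. With conditions (1)--(3) verified, Definition \ref{def:EZ cat} gives that $\Gamma$ is an EZ-category with $\deg \Gamma(G) = |\vertex(G)|$.
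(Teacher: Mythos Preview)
Your proof is correct and follows essentially the same route as the paper's: identify $\Gamma^+$ with the monomorphisms and $\Gamma^-$ with the split epimorphisms (via Lemmas \ref{codegensection}, \ref{mono edge}, and \ref{reverse inclusions}), then read off the three EZ conditions from the known Reedy structure together with Lemma \ref{strong pushouts}; your treatment of condition (3), absorbing the isomorphism components of a general split epimorphism before invoking Lemma \ref{strong pushouts}, is exactly what the paper's phrase ``repeated application'' is gesturing at.

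One small correction is worth flagging. Your parenthetical justification that contracting coface maps cannot occur in $\Gamma$---``removing an inner edge from a connected wheel-free graph disconnects it''---is false as stated: take two vertices joined by two parallel edges (both directed the same way), which is connected and wheel-free, and cut one of them. The conclusion you actually need, that every coface map in $\Gamma$ strictly raises $|\vertex(-)|$, is nonetheless true; it is precisely condition (i) of the generalized Reedy structure on $\Gamma$ established in \cite[6.70]{hry}, and the paper's proof simply invokes that axiom directly rather than re-arguing it. Replacing your parenthetical with a citation to \cite[6.70]{hry} fixes this without changing anything else.
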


\begin{proof}
Since coface maps are injections on edge sets, $\Gamma^+$ is contained in the set of monomorphisms by Lemma \ref{mono edge}, while $\Gamma^-$ is contained in the set of split epimorphisms by Lemma \ref{codegensection}. Thus the usual Reedy factorization of maps implies that (2) holds.

By Lemma \ref{reverse inclusions} we know that $\Gamma^+$ is exactly the set of monomorphisms and $\Gamma^-$ is exactly the set of split epimorphisms.
The first statement and condition (i) of a Reedy category imply that (1) holds. 
The second statement and repeated application of Lemma \ref{strong pushouts} implies that (3) holds.
\end{proof}

The Reedy structure on $\Gammaw$ from Section \ref{sec:reedygammaw} is not of Eilenberg-Zilber type, for $\Gammaw^+$ is not contained in the set of monomorphisms. Indeed, monomorphisms are in particular monomorphisms on edge sets (since if $f \colon G \to H$ is a monomorphism, then so is $\edge(G) = \hom({\uparrow}, G) \to \hom({\uparrow}, H) = \edge(H)$), hence outer contracting coface maps are not monomorphisms. In fact, more is true, and no choice of degree function on $\Gammaw$ will turn it into an EZ-category. Any such category is a generalized Reedy category whose direct part is the class of monomorphisms and whose inverse part is the class of split epimorphisms, so we simply need to exhibit one map which does not factor as a split epimorphism followed by a monomorphism. As usual, by $\Gammaw$ we mean one of the categories $\oldB$ or $\oldC$.

\begin{figure}
\begin{center}
   \includegraphics[width=.5\textwidth]{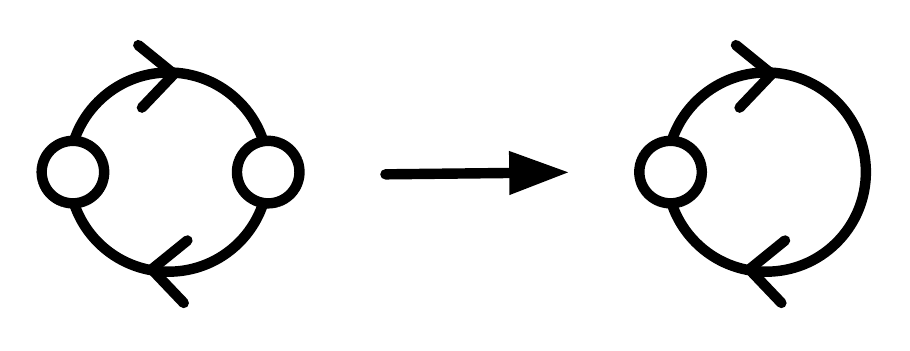}
\end{center}
	\caption{A codegeneracy map in $\Gammaw$}\label{gwnotez_fig}
\end{figure}



\begin{theorem}\label{gammaw not ez}
The category $\Gammaw$ does not admit the structure of an EZ-category.
\end{theorem}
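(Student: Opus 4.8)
The strategy follows the reduction set up just above the theorem: any EZ-structure on $\Gammaw$ makes it a generalized Reedy category whose direct maps are the monomorphisms and whose inverse maps are the split epimorphisms, so it is enough to exhibit one morphism that does not factor as a split epimorphism followed by a monomorphism. I would take $s\colon G \to K$ to be the codegeneracy of Figure \ref{gwnotez_fig}, obtained by gluing a corolla $C_{(n;m)}$ to $C_{(1;1)}$ and contracting the unary vertex to reach $K = \xi_i^j C_{(n;m)}$; choosing the parameters so that $K \neq {\wheel}$ ensures the example lives in both $\oldB$ and $\oldC$. The essential combinatorial input is Remark \ref{failure_for_gw}, which says that this $s$ has no section; that is, $s$ is not a split epimorphism.

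Suppose, for contradiction, that $s = m\circ e$ with $e\colon G \to L$ a split epimorphism and $m\colon L \to K$ a monomorphism. I would first locate $e$ and $m$ inside the generalized Reedy structure of Section \ref{sec:reedygammaw}. Since a monomorphism is injective on edge sets, it sends no vertex to an edge, so Proposition \ref{plus characterization} places $m$ in $\Gammaw^+$. Dually, a split epimorphism lies in $\Gammaw^-$: factoring it as in Proposition \ref{prop:wheeled_decomposition} and using that a section of the whole map yields a section of its $\Gammaw^+$-part, one reduces to the statement that no nontrivial composite of coface maps admits a section — the analogue for $\Gammaw$ of the relevant half of Lemma \ref{reverse inclusions}.

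With $m \in \Gammaw^+$ and $e \in \Gammaw^-$, the equality $s = m e$ exhibits $s$ as a $(\Gammaw^+,\Gammaw^-)$-factorization. But $s$ is itself a codegeneracy, so $s \in \Gammaw^-$ and $s = \Id_K \circ s$ is a second such factorization. Uniqueness up to isomorphism in Proposition \ref{prop:wheeled_decomposition} then forces $m$ to be an isomorphism. Taking any section $\sigma$ of $e$, the composite $\sigma m^{-1}$ satisfies $s\circ(\sigma m^{-1}) = m e \sigma m^{-1} = \Id_K$, so $s$ would be a split epimorphism after all, contradicting the first paragraph. Therefore $s$ admits no factorization as a split epimorphism followed by a monomorphism, and $\Gammaw$ cannot be an EZ-category. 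I expect the main obstacle to be the two facts underlying the argument: a careful verification that the chosen codegeneracy $s$ genuinely has no section (the content of Remark \ref{failure_for_gw}, which is precisely where the wheeled case departs from Lemma \ref{codegensection}), and the inclusion of split epimorphisms into $\Gammaw^-$, i.e.\ that the new contracting coface maps — like the old coface maps — never admit sections.
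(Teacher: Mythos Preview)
Your strategy differs from the paper's. The paper works with the specific codegeneracy from the two-vertex cycle $G$ to $H=\xi_1^1C_{(1;1)}$ and argues by enumeration: a monomorphism into $H$ is injective on edges and $|\edge(H)|=1$, so any intermediate object $L$ has at most one edge; listing the handful of connected graphs with $|\edge(L)|\le 1$ and checking which receive a map from $G$ and admit one to $H$ forces $L=H$, whence the factorization is trivial. Your route through the Reedy structure --- place monomorphisms in $\Gammaw^+$ and split epimorphisms in $\Gammaw^-$, then invoke uniqueness of the Reedy factorization against $s=\Id_K\circ s$ --- is cleaner conceptually and would apply to \emph{any} section-free codegeneracy, but it rests on two inclusions the paper never establishes.

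There is a real gap in your argument for the first inclusion. You infer ``sends no vertex to an edge'' from ``injective on edge sets,'' but in $\oldB$ this fails: the codegeneracy $\xi_1^1 C_{(1;1)} \to {\wheel}$ is bijective on edges yet collapses its unique vertex. (The implication \emph{does} hold in $\oldC$: if a $(1;1)$-vertex has its input equal to its output, connectedness forces the domain to be $\xi_1^1 C_{(1;1)}$, and then the only possible target is $\wheel$.) The conclusion you actually need --- that monomorphisms lie in $\Gammaw^+$ --- is nevertheless true: a map collapsing a self-loop vertex is never a monomorphism, since the two distinct codegeneracies from the two-vertex cycle into its domain become equal after composition with it. With this repair your argument goes through; your reduction of the second inclusion to ``no non-invertible map in $\Gammaw^+$ admits a section'' is correct and follows from the degree function and Proposition~\ref{prop:wheeled_decomposition}. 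The paper's enumeration sidesteps all of this, at the cost of being tied to one particular example.
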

\begin{proof}
We consider the codegeneracy from Figure \ref{gwnotez_fig};
let $G = $ \begin{tikzpicture}
	\draw[line width=1pt] (0,0) circle (.9ex);
	\fill (.9ex,0) circle (.4ex);
	\fill (-.9ex,0) circle (.4ex);
	\draw[line width=.5pt] (-.2ex, 1.15ex) -- (.1ex, .9ex) -- (-.2ex, .65ex);
	\draw[line width=.5pt] (.2ex, -1.15ex) -- (-.1ex, -.9ex) -- (.2ex, -.65ex);
\end{tikzpicture} and $H = $  \begin{tikzpicture}
	\draw[line width=1pt] (0,0) circle (.9ex);
	\fill (-.9ex,0) circle (.4ex);
\end{tikzpicture} $ = \xi^1_1 C_{(1;1)}$. There are two codegeneracy maps $G \to H$, which, as we mentioned in Remark \ref{failure_for_gw}, do not admit sections.
In fact, we will show that there is no factorization of a codegeneracy $s\colon G \to H$ into a split epimorphism followed by a monomorphism.
Thus, the category $\Gammaw$ does not admit any Eilenberg-Zilber structure.

Suppose we have any factorization $G \xrightarrow{g} K \xrightarrow{f} H$ with $f$ a monomorphism and $s = f\circ g$.
As noted in the paragraph preceding this theorem, monomorphisms are in particular injective on edges, so $|\edge(K)| \leq 1$, that is,
\[
	K \in \left\{ \bullet, {\uparrow}, {\wheel}, {\tikz[ baseline=-.7ex] \filldraw[line width=1pt, baseline=-10pt] (0,.7ex) -- (0,-.7ex) circle (2pt);} , { \tikz[ baseline=-.7ex] \filldraw[line width=1pt] (0,-.7ex) -- (0,.7ex) circle (2pt);} , \tikz[ baseline=-.7ex] \filldraw[line width=1pt] (0,-.7ex) circle (2pt) -- (0,.7ex) circle (2pt);, H  \right\}. 
\]
If $K$ is in this list, then $\hom(G,K) = \varnothing$ unless $K={\wheel}$ or $K= H$.
Further, $\hom({\wheel}, H) = \varnothing$ and $\hom(H,H) = \{ \Id_H \}$, so we must have had $g = s$ and $f = \Id_H$.
Since $s$ does not admit a section, the given factorization does not have $g$ a split epimorphism.
\end{proof}

We end this section by showing that $\Gamma$ satisfies \cite[Definition 5.1]{minimalfib}, which is a different notion of Eilenberg-Zilber category.
These are generalized Reedy categories whose inverse category is the subcategory of split epimorphisms, such that if two split epimorphisms have the same set of sections then they are equal. Since we already know that $\Gamma^-$ is the set of split epimorphisms, we need only show the following.

\begin{proposition}\label{EZ1 prop}
	If $f, f' \in \Gamma^-$ have the same set of sections, then $f=f'$.
\end{proposition}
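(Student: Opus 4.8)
The plan is to show that a split epimorphism $f \in \Gamma^-$ is completely determined by its set of sections, exploiting the faithful functor $(-)_0 \colon \Gamma \to \Set$ sending a graph to its edge set. By Proposition \ref{sections_bijection}, the map $\sections(f) \to \sections(f_0)$ is a bijection, so the set of sections of $f$ is in bijective correspondence with the set of sections of the underlying function $f_0 \colon \edge(G) \to \edge(K)$. Since $(-)_0$ is faithful (indeed injective on hom-sets by \cite[Corollary 6.62]{hry}), it suffices to show that $f_0 = f'_0$, because then $f = f'$. So the crux is reduced to a purely set-theoretic statement: if two surjections of finite sets have the same set of sections, they are equal.

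First I would reduce to the statement about $f_0$ and $f'_0$. Write $f \colon G \to K$ and $f' \colon G' \to K'$. A preliminary observation is that any section $g$ of $f$ is in $\Gamma^+$ (it is a split monomorphism, hence a monomorphism, and monomorphisms lie in $\Gamma^+$ by Lemma \ref{reverse inclusions}); in particular every section is a composite of coface maps up to isomorphism, and in fact $f$ and $f'$ must have the same source $G = G'$ and the same target $K = K'$ for the hypothesis even to make sense — a common section $g \colon K \to G$ forces the source and target to agree. So we may take $f, f' \colon G \to K$ with a shared (nonempty) set of sections $\sections(f) = \sections(f')$. Applying the functor $(-)_0$ and using Proposition \ref{sections_bijection} twice, we obtain $\sections(f_0) = \sections(f'_0)$ as subsets of $\Set(\edge(K), \edge(G))$.

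Next I would settle the set-theoretic core. Let $p, q \colon X \to Y$ be surjections of nonempty finite sets with $\sections(p) = \sections(q)$. I claim $p = q$. Given any section $g$ of $p$, we have $p g = \Id_Y$; since $g$ is also a section of $q$, $q g = \Id_Y$. Fix $x \in X$ and set $y = p(x)$. By surjectivity of $p$ one can build a section $g$ of $p$ with $g(y) = x$ (choose any preimage at each point of $Y$, taking $x$ at $y$). Then $q(x) = q(g(y)) = y = p(x)$. As $x$ was arbitrary, $p = q$. Transporting this back through the faithful functor, $f_0 = f'_0$, hence $f = f'$.

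The main obstacle is the bookkeeping at the start: verifying that the hypothesis $\sections(f) = \sections(f')$ genuinely forces $f$ and $f'$ to share a common source and target, so that the equality of section sets is meaningful, and then cleanly importing Proposition \ref{sections_bijection} to pass between the graphical and set-theoretic pictures. Once the problem is faithfully reduced to surjections of finite sets, the argument that a surjection is determined by its sections is elementary. I expect the proof to be short, with essentially all the content already packaged in Proposition \ref{sections_bijection} and the faithfulness of $(-)_0$.
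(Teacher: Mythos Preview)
Your proposal is correct and follows essentially the same approach as the paper: reduce to the underlying edge functions via Proposition~\ref{sections_bijection}, invoke the elementary fact that a surjection of sets is determined by its set of sections, and then appeal to the faithfulness of $(-)_0$ (\cite[Corollary~6.62]{hry}) to conclude $f=f'$. The paper's version is terser and omits your source/target bookkeeping and the explicit proof of the set-theoretic lemma, but the logical skeleton is identical.
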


\begin{proof}
If $p$ and $q$ are two surjections of sets with $\sections(p) = \sections(q)$, then $p=q$.

The hypothesis of the proposition says that $\sections(f) = \sections(f')$, which implies that $\sections(f_0) = \sections(f_0')$ by Proposition \ref{sections_bijection}. 
Since $f_0$ and $f_0'$ are surjections of sets which share a common set of sections, $f_0 = f_0'$. By \cite[Corollary 6.62]{hry}, $f=f'$.
\end{proof}

\begin{corollary}\label{EZ1 COROLLARY}
	The graphical category $\Gamma$ is an Eilenberg-Zilber category in the sense of \cite[5.1]{minimalfib}. In other words, $\Gamma$ is a generalized Reedy category so that
	\begin{itemize}
		\item $\Gamma^-$ is the subcategory of split epimorphisms, and
		\item if $s, s'\in \Gamma^-$ have the same set of sections, then $s=s'$.
	\end{itemize}
	\qed
\end{corollary}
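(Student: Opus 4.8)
The plan is to check that $\Gamma$ meets the three requirements packaged into \cite[Definition 5.1]{minimalfib}, assembling pieces that are already in hand. The first requirement, that $\Gamma$ be a generalized Reedy category, is exactly \cite[6.70]{hry}. The second, that $\Gamma^-$ coincide with the class of split epimorphisms, was already recorded in the proof of Theorem \ref{EZ2 theorem}: Lemma \ref{codegensection} exhibits a section of each codegeneracy, so by composition every map in $\Gamma^-$ is a split epimorphism, while Lemma \ref{reverse inclusions} supplies the reverse inclusion.

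For the third requirement --- that a split epimorphism in $\Gamma$ is determined by its set of sections --- I would invoke Proposition \ref{EZ1 prop} directly. This is the only clause carrying genuinely new content, and its proof already reduces the statement to the trivial set-theoretic fact that a surjection is determined by its sections: given $f, f' \in \Gamma^-$ with $\sections(f) = \sections(f')$, Proposition \ref{sections_bijection} transports this equality to $\sections(f_0) = \sections(f'_0)$, which forces $f_0 = f'_0$, and then \cite[Corollary 6.62]{hry} lifts this back to $f = f'$.

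Consequently the corollary is a pure bookkeeping combination of the preceding statements, and I anticipate no real obstacle at this stage --- which is why a \qed is appropriate. Whatever difficulty exists was resolved earlier, in Proposition \ref{sections_bijection}: there one must show that \emph{every} set-level section of a codegeneracy lifts to an honest graphical section, which rests on Lemma \ref{codegensection} together with the inductive choice between the two sections $\gamma^1$ and $\gamma^2$ distinguishing the preimages $e_1, e_2$ of the contracted edge. Once that bijection on section sets is established, the separation property descends to $\Set$ for free, and the three conditions defining an Eilenberg--Zilber category in the sense of \cite{minimalfib} are all verified.
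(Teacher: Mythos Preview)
Your proposal is correct and matches the paper's approach exactly: the corollary is marked with a bare \qed\ because it is immediate from the preceding Proposition \ref{EZ1 prop} together with the identification of $\Gamma^-$ with the split epimorphisms (Lemmas \ref{codegensection} and \ref{reverse inclusions}) and the generalized Reedy structure from \cite[6.70]{hry}. Your elaboration on where the real work lies (Proposition \ref{sections_bijection}) is accurate and consistent with the paper's narrative.
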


\section{Segal properads}\label{sec:weaksegalproperads}

Let $\sSet$ be the category of simplicial sets.
If $\mathcal R$ is any generalized Reedy category, there is a model structure on $\sSet^{\mathcal R}$ as proved in \cite{bm}.
We will say that an object is Reedy fibrant if it is fibrant in this model structure.

\begin{definition}\label{weak segal properad}
	An object $\sK \in \gupcsset$ (resp. $\in \gwheelcsset$) is called a Segal (wheeled) properad if the following hold:
	\begin{itemize}
		\item $\sK$ is Reedy fibrant,
	 	\item $\sK(\uparrow) = \Delta^0$, and
	 	\item for all\footnote{Technically, if we are working in $\Gamma$ we do not yet have Segal maps associated to $G = {\uparrow}$ or $G= C_{(n;m)}$ -- this is not particularly relevant, as the Segal core inclusions should just be isomorphisms in this case, so the Segal map is even an isomorphism.} graphs $G$, the Segal map
	 	\[
	 		\epsilon_G \colon \sK(G) \to \map(\Sc[G], \sK)
	 	\]
	 	is a weak equivalence of simplicial sets.
	\end{itemize} 
\end{definition}

The purpose of this section is to point out that there is a model category whose fibrant objects are precisely the Segal (wheeled) properads.
These should be thought of as one-colored properads so that properadic composition is only weakly defined.

Let $\gupcsset_*$ (resp. $\gwheelcsset_*$) be the full subcategory consisting of presheaves $\sK$ with $\sK(\uparrow) = \Delta^0$.
This is a reflective subcategory, where the left adjoint to the inclusion is `reduction' $\sK \mapsto \sK_*$ (see analogues in the dendroidal setting in \cite[proof of 9.4]{cm-simpop} or \cite[Definition 3.6]{bh1}, with the general case in \cite{hco}).

\begin{proposition}\label{reedy star model}
	The categories $\gupcsset_*$ and $\gwheelcsset_*$ admit left proper, cellular, simplicial model category structures, lifted from the Berger-Moerdijk-Reedy model structures on $\gupcsset$ and $\gwheelcsset$.
\end{proposition}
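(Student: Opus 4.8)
The plan is to obtain these model structures by a standard left Bousfield localization and transfer argument, using the reflective structure already mentioned. First I would recall that since $\Gamma$ and $\Gammaw$ are generalized Reedy categories (the former by \cite{hry}, the latter by Theorem \ref{gammaw reedy}), the Berger--Moerdijk machinery \cite{bm} endows $\gupcsset$ and $\gwheelcsset$ with generalized Reedy model structures, which are moreover left proper, cellular, and simplicial because they are built on the injective-type structure over $\sSet$ and inherit these properties levelwise. The reflective subcategory inclusion $\gupcsset_* \hookrightarrow \gupcsset$ (and its wheeled analogue) has left adjoint the reduction functor $\sK \mapsto \sK_*$, as noted in the excerpt.

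Next I would invoke the general lifting result from \cite{hco}, which is precisely the machinery the section is said to rest on. The key is that reduction is a localization: the subcategory $\gupcsset_*$ of presheaves with $\sK(\uparrow) = \Delta^0$ can be realized as the local objects for the single map collapsing $\Gamma[\uparrow]$ to a point, so the reflective localization is exactly a left Bousfield localization of the ambient model category. I would therefore state that the model structure on $\gupcsset_*$ is obtained by restricting the localized model structure to the (replete, reflective) subcategory, with cofibrations and weak equivalences created by the inclusion and fibrant objects being the Reedy-fibrant presheaves $\sK$ with $\sK(\uparrow) = \Delta^0$. The three adjectives transfer: left properness is preserved under left Bousfield localization, cellularity is preserved for localizations at a set of maps, and the simplicial enrichment descends because the localization is a simplicial localization.

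The main obstacle I expect is verifying that the hypotheses of the abstract lifting theorem in \cite{hco} are genuinely met in these two concrete cases — in particular that reduction interacts correctly with the generalized Reedy structure so that the localized and lifted structures coincide, and that the exceptional objects $\uparrow$ and $\wheel$ (and the subtleties around $\oldB$ versus $\oldC$ discussed in Section \ref{modification section}) do not disrupt the argument. I would handle this by checking that $\uparrow$ has degree $0$ and sits appropriately at the bottom of the Reedy filtration, so that pinning $\sK(\uparrow) = \Delta^0$ is compatible with the matching- and latching-object constructions; for $\gwheelcsset_*$ the remark that there are no non-identity maps out of $\wheel$ ensures that the reduction is still well-behaved regardless of whether $\wheel$ is present. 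The remaining verifications — that the resulting structure is again left proper, cellular, and simplicial — are then formal consequences of the corresponding statements for the unreduced Berger--Moerdijk--Reedy structures together with the preservation properties of reflective localization, so I would cite \cite{hco} for the general statement and merely indicate how $\Gamma$ and $\Gammaw$ fit its framework.
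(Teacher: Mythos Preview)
Your approach is the same as the paper's: invoke the general lifting machinery from \cite{hco} and verify its hypotheses for $\mathcal R = \Gamma$ and $\mathcal R = \Gammaw$. You correctly note that $\uparrow$ is the unique object of degree~$0$. The paper's proof, however, checks exactly two hypotheses before citing \cite{hco} (specifically Theorem~7.13 and Proposition~7.16 there): the degree-$0$ uniqueness you mention, \emph{and} the condition that for every object $G$ there are only finitely many morphisms in $\mathcal R^+$ with codomain $G$. You do not mention this second finiteness hypothesis, and it is half of what the cited results require, so it should appear explicitly in your verification. Your discussion of $\wheel$ and the $\oldB$/$\oldC$ distinction is tangential here; the two structural hypotheses above hold uniformly for both versions of $\Gammaw$ without a separate argument for the exceptional loop.
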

\begin{proof}
Let $\mathcal R$ be either $\Gamma$ or $\Gammaw$.
Then $\mathcal R$ contains a unique object of degree 0, namely $\uparrow$.
Further, if $G$ is a graph, then there are only finitely many maps in $\mathcal R^+$ with codomain $G$. Thus this proposition is a special case of the relevant theorems from \cite{hco} (Theorem 7.13 and Proposition 7.16).
\end{proof}

\begin{theorem}\label{reduced ms}
	The category $\gupcsset_*$ (resp. $\gwheelcsset_*$) admits a model structure with fibrant objects the Segal properads (resp. the Segal wheeled properads).
\end{theorem}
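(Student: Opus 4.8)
The plan is to realize the desired model structure as a left Bousfield localization of the structure provided by Proposition \ref{reedy star model}. Write $\mathcal M$ for either $\gupcsset_*$ or $\gwheelcsset_*$ equipped with the left proper, cellular, simplicial model structure of that proposition, and let $\mathcal R$ be the corresponding generalized Reedy category ($\Gamma$ or $\Gammaw$). Take $S = \{\, (\epsilon_G)_* \colon \Sc[G]_* \to \mathcal R[G]_* \,\}$ to be the (small) set obtained by reducing the Segal core maps, indexed over the objects $G$ of $\mathcal R$. Since $\mathcal M$ is left proper and cellular, the machinery of left Bousfield localization (as in \cite{hco}) applies to produce $L_S\mathcal M$, a left proper cellular simplicial model category with the same cofibrations as $\mathcal M$ and whose fibrant objects are the $S$-local objects of $\mathcal M$.

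It then remains to identify these local objects with the Segal (wheeled) properads. First we recall that $X$ is fibrant in $L_S\mathcal M$ exactly when $X$ is fibrant in $\mathcal M$ --- that is, $X$ is Reedy fibrant with $X(\uparrow) = \Delta^0$ --- and $X$ is $S$-local, meaning that $\map(\mathcal R[G]_*, X) \to \map(\Sc[G]_*, X)$ is a weak equivalence of simplicial sets for every index $G$. The key step is to recognize this as the Segal condition of Definition \ref{weak segal properad}. Using that reduction $\sK \mapsto \sK_*$ is left adjoint to the inclusion of $\mathcal M$ into the full presheaf category, together with the simplicial Yoneda lemma, we obtain natural identifications
\[
	\map(\mathcal R[G]_*, X) \cong \map(\mathcal R[G], X) \cong X(G), \qquad \map(\Sc[G]_*, X) \cong \map(\Sc[G], X),
\]
under which the map above becomes precisely the Segal map $\epsilon_G^* \colon X(G) \to \map(\Sc[G], X)$. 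Hence $X$ is $S$-local if and only if every Segal map is a weak equivalence, so the fibrant objects of $L_S\mathcal M$ are exactly the Segal (wheeled) properads, as desired.

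The step demanding the most care is ensuring that the simplicial mapping spaces above genuinely compute homotopy function complexes, so that $S$-locality is detected on the nose by $\map$; this requires the sources $\mathcal R[G]_*$ and $\Sc[G]_*$ to be cofibrant in $\mathcal M$. Representables are Reedy cofibrant, $\Sc[G]$ is a colimit of representables along monomorphisms and hence cofibrant, and reduction preserves cofibrations, so this holds; one also uses that the reduction adjunction is compatible with the simplicial enrichment in order to justify the displayed identifications. Finally, indexing $S$ over all $G$ is harmless: for $G = \uparrow$ and $G = C_{(n;m)}$ the map $\epsilon_G$ is an isomorphism (see the table following the definition of $\Sc[G]$), so the corresponding members of $S$ are already weak equivalences and impose no condition, leaving only the genuinely new Segal core inclusions (including $\epsilon_\wheel$ when $\mathcal R = \oldB$). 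With these points in place, the characterization of fibrant objects, and hence the theorem, follows.
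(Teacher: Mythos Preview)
Your proposal is correct and follows essentially the same route as the paper: apply left Bousfield localization of the model structure from Proposition~\ref{reedy star model} at the reduced Segal core inclusions, then identify the local objects using cofibrancy of the sources, the reduction adjunction, and Yoneda. The paper cites \cite[4.1.1]{hirschhorn} rather than \cite{hco} for the localization machinery and invokes \cite[Corollary~7.15]{hco} to equate fibrancy in $\mathcal M$ with Reedy fibrancy in the ambient presheaf category, but otherwise the arguments match in structure and in the handling of the cofibrancy and mapping-space issues you flag.
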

\begin{proof}
For notational reasons we prove the theorem for $\gupcsset_*$, with the understanding that the proof for $\gwheelcsset_*$ is nearly identical.

Notice that an object $\sK \in \gupcsset_*$ is fibrant if and only if it is fibrant in the larger category $\gupcsset$ (by Corollary 7.15 in \cite{hco}).

We apply the composite $\gupcset \to \gupcsset \to \gupcsset_*$ to each Segal core inclusion $\Sc[G] \to \Gamma[G]$, and call the resulting set 
\[ \{ \Sc[G]_* \to \Gamma[G]_* \} \] the \emph{reduced Segal core inclusions}.
Domains and codomains of these maps are cofibrant since $\Sc[G]$ and $\Gamma[G]$ are cofibrant and the reduction functor is left Quillen.

Since the model structure from Proposition \ref{reedy star model} is left proper and cellular, we may apply left Bousfield localization at the set of reduced Segal core inclusions \cite[4.1.1]{hirschhorn}.
The fibrant objects in the localized model structure are those objects $\sK$ which are fibrant in the unlocalized model structure (which is the same as being Reedy fibrant in $\gupcsset$), such that
\[
	\map^h(\Sc[G]_*, \sK) \leftarrow \map^h(\Gamma[G]_*, \sK)
\]
is a weak equivalence of simplicial sets for all $G$ (where $\map^h$ is the homotopy function complex).
In any simplicial model category with mapping spaces $\map$, if $A$ is cofibrant and $Z$ is fibrant, then $\map^h(A,Z) \simeq \map(A,Z)$.
Since $\Sc[G]_*$ and $\Gamma[G]_*$ are both cofibrant and $\sK$ is fibrant, we are thus asking that
\[
	 \map(\Sc[G], \sK) = \map(\Sc[G]_*, \sK) \leftarrow \map(\Gamma[G]_*, \sK) =  \sK(G)
\]
be a weak equivalence of simplicial sets. This weak equivalence is precisely the third requirement from Definition \ref{weak segal properad}. Since every object in the category satisfies $\sK(\uparrow) = \Delta^0$, the fibrant objects in the localized model structure are precisely the Segal properads.
\end{proof}

The category of one-colored simplicial (wheeled) properads also admits a model structure (lifted from that on $\sSet^{\Sigma^{op}\times \Sigma}$ by, say, \cite[2.1]{bmresolution} using the operad from \cite[14.1.2, p.~265]{yj}). In light of \cite[1.1]{bh1}, we conjecture that this model structure is Quillen equivalent to the one from Theorem \ref{reduced ms}.


\providecommand{\bysame}{\leavevmode\hbox to3em{\hrulefill}\thinspace}

\end{document}